\theoremstyle{plain}
\newtheorem{theorem}{Theorem}[section]
\newtheorem{proposition}[theorem]{Proposition}
\newtheorem{observation}[theorem]{Observation}
\newtheorem{corollary}[theorem]{Corollary}
\newtheorem{claim}[theorem]{Claim}
\theoremstyle{definition}
\theoremstyle{remark}
\newtheorem{example}[theorem]{Example}
\begin{document}

\newcommand{\Dkk}{\Delta_k \times \Delta_k}
\newcommand{\aij}{A_{ij}}
\newcommand{\cf}{\mathcal{F}}
\newcommand{\F}{\mathcal{F}}
\newcommand{\I}{\mathcal{I}}
\newcommand{\J}{\mathcal{J}}
\newcommand{\M}{\mathcal{M}}

\newcommand{\T}{\mathcal{T}}

\newcommand{\e}{\varepsilon}
\newcommand{\vj}{\vec{j}}
\newcommand{\vw}{\vec{w}}
\newcommand{\vz}{\vec{z}}
\newcommand{\vh}{\vec{h}}
\newcommand{\vx}{\vec{x}}
\newcommand{\vy}{\vec{y}}
\newcommand{\vk}{\vec{k}}
\newcommand{\vv}{\vec{v}}
\newcommand{\vl}{\vec{\ell}}
\newcommand{\cl}{\mathcal L}
\newcommand{\cm}{\mathcal M}
\newcommand{\cn}{\mathcal N}
\newcommand{\dist}{\text{dist}}
%%%%%%%%%%%%%%%%%%%%%%%%%%%%%%%%%%%%%%%%%%%%%%%%%%%%%%%%%%%%%%%%%%%%%%%%%%%%%%%%%

\title[Edge-covers in $d$-interval hypergraphs]{Edge-covers in $d$-interval hypergraphs}

\author{Ron Aharoni}\thanks{The research of the first  author was
supported by  BSF grant no.
2006099, by an ISF grant and by the Discount Bank
Chair at the Technion.}
\address{Department of Mathematics,
Technion\\
Haifa, Israel} \email{raharoni@gmail.com}

\author{Ron Holzman}\thanks{}
\address{Department of Mathematics,
Technion\\
Haifa, Israel} \email{holzman@tx.technion.ac.il}
\author{Shira Zerbib}\thanks{The research of the third author was partly supported by the New-England Fund, Technion.}

\address{Department of Mathematics,
University of Michigan, Ann Arbor, USA} \email{zerbib@umich.edu}

%%%%%%%%%%%%%%%%%%%%%%%%%%%%%%%%%%%%%%%%%%%%%%%%%%%%%%%%%%%%%%%%%%%%%%%%%%%%%%%%%

\begin{abstract}

  A $d$-interval hypergraph has $d$ disjoint copies of the unit interval as its vertex set, and each edge is the union of $d$ subintervals, one on each copy. Extending a classical result of Gallai on the case
$d=1$, Tardos and Kaiser used
topological tools to bound the ratio between the transversal
  number and
  the matching number in such hypergraphs.
  We take a dual point of view, and bound the edge-covering number (namely the minimal number of edges covering the entire vertex set) in terms of a parameter expressing independence of systems of partitions of the $d$ unit intervals.
   The main tool we use is an extension of  the KKM theorem to products of simplices, due to Peleg. Our approach also yields a new proof of the Tardos-Kaiser result.
  \end{abstract}

\maketitle

%%%%%%%%%%%%%%%%% the text file

\section{Introduction}
\label{sec.int}

A {\em $d$-interval hypergraph} has vertex set $V=\bigcup_{i=1}^d U^i$, where the $U^i$ are $d$ disjoint copies of the unit interval $[0,1]$. It has a finite edge set $E$, each edge being of the form $e=\bigcup_{i=1}^d I^i$ where $I^i$ is a non-empty closed subinterval of $U^i$. In the case $d=1$ we get the familiar interval hypergraphs. There are other variants of $d$-interval hypergraphs in the literature \cite{gl2, kaiser, alon, berger, AKZ}, including those where edges consist of unions of $d$ intervals on the same copy of $[0,1]$, and discrete versions where the vertex set is a finite ordered set; these will not be discussed here.

We recall that in a hypergraph $H=(V,E)$, a {\em matching} is a set of disjoint edges, and $\nu(H)$ denotes the {\em matching number} -- the maximal size of a matching. A {\em transversal} (also called {\em vertex-cover}) is a set
of vertices meeting all edges, and $\tau(H)$ denotes the \emph{transversal number} -- the minimal size of a transversal. We always have $\nu(H) \le \tau(H)$.

By reversing the roles of vertices and edges (duality of hypergraphs), we get two corresponding concepts. Namely, in a hypergraph $H=(V,E)$, a set of vertices is {\em strongly independent} if no two of them belong to an edge. (An independent set is one that does not contain any edge, hence the ``strongly" in our terminology.) We write $\iota(H)$ for the {\em strong independence number} -- the maximal size of a strongly independent set ($\iota(H) = \infty$ is allowed). A set of edges is an {\em edge-cover} if their union is $V$. We write $\rho(H)$ for the {\em edge-covering number} -- the minimal size of an edge-cover ($\rho(H) = \infty$ if $\cup E \ne V$). We always have $\iota(H) \le \rho(H)$.

It is well known that for interval hypergraphs equality holds for each of these pairs of parameters (part (a) below is due to Gallai \cite{gal62}):

\begin{theorem}\label{gallai}
Let $H=(V,E)$ be an interval hypergraph. Then:
\begin{itemize}
\item[(a)] $\nu(H)=\tau(H)$.
\item[(b)] $\iota(H)=\rho(H)$.
\end{itemize}
\end{theorem}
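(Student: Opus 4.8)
The plan is to prove both parts by a single left-to-right sweep of $[0,1]$, using repeatedly the one-dimensional Helly property: if an edge (an interval) contains two points $x<y$ then it contains every point between them, so along an increasing sequence of points it suffices to control \emph{consecutive} pairs.

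For part (a), recall $\nu(H)\le\tau(H)$, so it suffices to produce a transversal and a matching of equal size. I process the edges in order of right endpoint: let $e_1=[a_1,b_1]$ have the smallest right endpoint, place the point $b_1$ in the transversal, delete every edge containing $b_1$, and repeat on the survivors, obtaining points $b_1<b_2<\cdots<b_m$ and edges $e_1,\dots,e_m$. Every edge contains some $b_k$ (the step at which it was deleted), so $\{b_1,\dots,b_m\}$ is a transversal and $\tau(H)\le m$. Moreover the $e_k$ are pairwise disjoint: a surviving edge at step $k+1$ has right endpoint $\ge b_k$ yet does not contain $b_k$, forcing $a_{k+1}>b_k$, so $e_{k+1}$ lies entirely to the right of $e_k$. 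Hence $\nu(H)\ge m$, and with $\nu(H)\le\tau(H)$ we get $\nu(H)=\tau(H)=m$.

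For part (b), recall $\iota(H)\le\rho(H)$; I will exhibit an edge-cover and a strongly independent set of a common size $t$, so that the chain $\rho(H)\le t\le\iota(H)\le\rho(H)$ collapses to equality. If $\bigcup E\ne V$ then $\rho(H)=\infty$, while the uncovered set is a nonempty relatively open subset of $[0,1]$; it is infinite and its points lie in no edge, hence form an infinite strongly independent set and $\iota(H)=\infty$ as well. Assume now $\bigcup E=V$. For a covered point $x$ let $\phi(x)$ denote the largest right endpoint of an edge containing $x$; the crux is the equivalence, for $x<y$, that \emph{some edge contains both $x$ and $y$ if and only if $\phi(x)\ge y$}. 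I now sweep: put $w_1=0$, and having chosen $w_j$ with $\phi(w_j)<1$, let $w_{j+1}$ be a point just beyond $\phi(w_j)$, chosen close enough that every edge containing it starts at or before $\phi(w_j)$; stop at the first $t$ with $\phi(w_t)=1$. By the equivalence, $w_{i+1}>\phi(w_i)$ means no edge contains $w_i$ and $w_{i+1}$, and for $i<j$ we have $\phi(w_i)<w_{i+1}\le w_j$, so no edge contains $w_i$ and $w_j$; thus $\{w_1,\dots,w_t\}$ is strongly independent and $\iota(H)\ge t$. On the other hand the $t$ edges realizing $\phi(w_1),\dots,\phi(w_t)$ cover $[0,1]$ without gaps (each starts at or before the previous reach), so $\rho(H)\le t$, and equality follows.

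The easy inequalities are immediate; the main obstacle is to make the continuous sweep in part (b) fully rigorous. One must check that, while $\phi(w_j)<1$, a suitable $w_{j+1}$ exists — this uses coverage to find uncovered points immediately to the right of $\phi(w_j)$ together with the finiteness of the edge endpoints to choose $w_{j+1}$ close enough that no new edge ``sneaks in'' beginning after $\phi(w_j)$ — and that the strictly increasing reaches $\phi(w_j)$, ranging over the finite set of right endpoints, force termination with $\phi(w_t)=1$ and the chosen edges to cover $[0,1]$. Once these endpoint and compactness details are dispatched, the Helly reduction to consecutive pairs closes both parts.
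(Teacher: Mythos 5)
Your proof is correct and follows essentially the same greedy sweep as the paper's: part (a) is the identical leftmost-right-endpoint argument, and part (b) is the paper's rightmost-reach recursion written iteratively via $\phi$, with your ``just beyond $\phi(w_j)$'' choice playing the role of the paper's ``small enough $\varepsilon$''. The extra care you take with the endpoint details and the $\bigcup E \ne V$ case only makes explicit what the paper leaves implicit.
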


\begin{proof} We briefly recall the standard constructive arguments.
\begin{itemize}
\item[(a)] Let $I=[a,b]$ be an interval in $E$ with the leftmost right endpoint. Then we can place $I$ in the matching being constructed, $b$ in the transversal, and proceed by induction with the subhypergraph consisting of those intervals in $E$ having their left endpoint to the right of $b$.
\item[(b)] We may assume that $\cup E = V$, otherwise $\iota(H)=\rho(H)=\infty$. Let $I=[0,c]$ be an interval in $E$ with the rightmost right endpoint among those starting at $0$. Then we can place $0$ in the strongly independent set being constructed, $I$ in the edge-cover, and proceed by induction with the hypergraph having as ground set $[c+\varepsilon,1]$ for small enough $\varepsilon >0$, and as edges the non-empty intersections of intervals in $E$ with the ground set.
\end{itemize}
\end{proof}

For $d$-interval hypergraphs with $d \ge 2$ these equalities need not hold, but one can bound the ratio between the corresponding pairs of parameters. In the case of $\nu$ and $\tau$ this was a challenging problem, solved by Tardos \cite{tardos} for $d=2$ and Kaiser \cite{kaiser} for general $d$:

\begin{theorem}[\cite{tardos, kaiser}]\label{t:kaiser}
Let $H$ be  a $d$-interval hypergraph, $d \ge 2$. Then $\tau(H) \le d(d-1)\nu(H)$.
\end{theorem}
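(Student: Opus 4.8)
The plan is to prove the Tardos--Kaiser bound $\tau(H) \le d(d-1)\nu(H)$ via a topological argument using a KKM-type theorem on products of simplices, following the abstract's hint that Peleg's extension of KKM is the main tool. First I would set up the geometry: since each edge of $H$ is a union of $d$ closed subintervals, one on each copy $U^i$ of $[0,1]$, I would encode the ``choice of a point'' on each $U^i$ by a point in a simplex. The natural parametrization is to fix a maximum matching $M$ of size $\nu=\nu(H)$ and use it to partition each interval $U^i$ into at most $\nu+1$ (or a bounded number of) cells determined by the intervals of the matching edges lying on $U^i$; a point of the product of simplices $\Delta \times \cdots \times \Delta$ ($d$ factors) should then correspond to a placement of a ``fractional transversal point'' on each copy.

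Next I would define, for each edge $e$ and each of its $d$ interval-pieces, a closed set in the product of simplices consisting of those parameter-tuples for which the corresponding point lies inside that piece of $e$; the key covering property to verify is that these closed sets satisfy the boundary/face condition required by Peleg's theorem (the analogue of the KKM condition that the set associated to a vertex-subset covers the corresponding face). The combinatorial heart is to arrange the labeling so that Peleg's theorem produces a point covered by many of these sets simultaneously, which translates into the existence of a small set of ``coordinates'' — at most $d(d-1)\nu$ of them — that meet every edge, i.e.\ a transversal of the claimed size. The factor $d(d-1)$ should emerge from counting: there are $d$ copies of the interval, and on each copy the number of relevant cells or the local covering multiplicity contributes a factor of order $d-1$, reflecting that an edge uses one piece per copy but the simplex dimension forces a $(d-1)$-fold overcounting.

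The hard part, I expect, will be two-fold. First, correctly formulating the closed sets so that the hypotheses of Peleg's product-KKM theorem are exactly met — in KKM-type arguments the covering/boundary conditions are delicate, and here one must ensure that on each face of each simplex factor the associated closed sets still cover, which requires a careful choice of which edge-piece is ``activated'' at each parameter value. Second, extracting the sharp constant $d(d-1)$ from the topological conclusion: Peleg's theorem guarantees a common point lying in many sets, but converting ``lies in the sets indexed by a certain configuration'' into ``these vertices form a transversal of size at most $d(d-1)\nu$'' demands a clean pigeonhole/counting step that accounts for both the $d$ copies and the intersection pattern of the matching cells.

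As an alternative sanity check and to guide the parametrization, I would keep in mind the $d=1$ base case, where Theorem~\ref{gallai}(a) gives $\tau=\nu$ by the greedy leftmost-right-endpoint argument; the topological proof should degenerate to something consistent with this (the product is a single simplex, and the KKM conclusion recovers a single transversal point per matching edge). I would also verify the bound is tight or near-tight for small cases (e.g.\ $d=2$, recovering Tardos's $\tau \le 2\nu$) to confirm the counting constant is right before committing to the full Peleg-based argument.
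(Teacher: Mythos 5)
Your plan correctly identifies the main tool (Peleg's KKM theorem for products of simplices), but as sketched it would not go through, and it runs the argument in the wrong direction. The paper's route to Theorem~\ref{t:kaiser} is: fix the largest $n$ with $d(n-1)<\tau(H)$; observe that if some $d\times n$-partition had no $d$-cell containing an edge, then every edge would meet one of the partition's $d(n-1)$ cut points, which would form a transversal of size less than $\tau(H)$ --- a contradiction; hence the covering hypothesis of Theorem~\ref{disjointcells} holds, and that theorem produces $\frac{n}{d-1}$ disjoint $d$-cells each containing an edge, i.e.\ a matching of size at least $\frac{n}{d-1}\ge\frac{\tau(H)}{d(d-1)}$. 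In other words, one lower-bounds $\nu$ by exhibiting a large matching. Your sketch instead tries to extract a transversal of size $d(d-1)\nu$ directly from a parametrization built on a maximum matching, and it is unclear how Peleg's conclusion (a common point of all the sets) would ever yield a transversal: the natural output of these intersection theorems is a balanced configuration from which one reads off a large matching, not a small cover.

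Two concrete ingredients are missing, and they are precisely where the constant comes from. First, the sets fed to Peleg's theorem are not of the form ``the parameter lies in piece $i$ of edge $e$''; they are defined via a balancing device: one forms the array $s_{\vj}(\vx)=\dist(\vx,Y_{\vj}^c)$ over all $d$-cells $\vj\in[n]^d$, where $Y_{\vj}$ is the (open) set of partitions whose $\vj$-th $d$-cell contains an edge, and lets $A^i_j$ consist of those $\vx$ for which the $j$-th layer in direction $i$ has maximal layer-sum. Peleg's theorem then yields a point where all layer-sums are equal, which is exactly a fractional matching of total weight $n$ in an auxiliary $d$-partite hypergraph on $dn$ vertices. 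Second, the factor $d-1$ does not come from ``overcounting on the simplex''; it comes from F\"uredi's theorem that $\nu\ge\nu^*/(d-1)$ for $d$-partite hypergraphs, applied to that auxiliary hypergraph, while the factor $d$ comes from the $d(n-1)$ cut points noted above. Without the choice of $n$ relative to $\tau(H)$, the layer-sum construction, and F\"uredi's bound, your counting heuristic has no mechanism for producing $d(d-1)$.
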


Both Tardos and Kaiser used topological methods. Alon \cite{alon} obtained a slightly weaker bound by combinatorial means.

In the case of $\iota$ and $\rho$, bounding the ratio for $d$-interval hypergraphs is easy:

\begin{observation}\label{rhoiota}
Let $H$ be a $d$-interval hypergraph. Then $\rho(H) \le d \iota(H)$, and this bound is tight.
\end{observation}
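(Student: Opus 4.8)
The plan is to treat the two inequalities separately: derive the upper bound $\rho(H)\le d\,\iota(H)$ from Gallai's equality (Theorem~\ref{gallai}(b)) applied copy by copy, and then exhibit a single family of examples attaining equality.

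For the upper bound I would first dispose of the degenerate case. If $\bigcup E\ne V$ then $\rho(H)=\infty$, but the uncovered set is the complement of finitely many closed intervals, hence relatively open and nonempty on some $U^i$; it therefore contains infinitely many points, no two of which lie in a common edge, so $\iota(H)=\infty$ and the inequality holds trivially. Assuming $\bigcup E=V$, for each $i$ let $H^i=(U^i,E^i)$ be the interval hypergraph whose edges are the $i$-th components $I^i$ of the edges $e=\bigcup_j I^j$ of $H$; since $\bigcup E=V$ we have $\bigcup E^i=U^i$. Two observations drive the argument. First, a set $S\subseteq U^i$ that is strongly independent in $H^i$ is strongly independent in $H$ as well, because two points of $U^i$ lie in a common edge of $H$ exactly when they lie in a common interval of $E^i$; hence $\iota(H^i)\le\iota(H)$. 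Second, lifting each interval in an optimal edge-cover of $U^i$ to an edge of $H$ realizing it yields an edge-cover of $U^i$ by at most $\rho(H^i)$ edges of $H$. Taking the union of these covers over all $i$ covers $V$, so, using Gallai's equality $\rho(H^i)=\iota(H^i)$ on each copy, $\rho(H)\le\sum_{i=1}^d\rho(H^i)=\sum_{i=1}^d\iota(H^i)\le d\,\iota(H)$.

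For tightness I would, for each $d$, build a hypergraph with $\iota(H)=1$ and $\rho(H)=d$. Include the $d$ edges $E_i$ whose $i$-th component is all of $U^i$ and whose remaining components are single points, and, for each pair $\{i,j\}$, four edges whose $i$- and $j$-components range over the four products of $[0,\tfrac12]$ and $[\tfrac12,1]$ (so that the rectangles $I^i\times I^j$ tile $U^i\times U^j$) and whose other components are single points. Every pair of vertices on a common copy $U^i$ lies in $E_i$, and every pair on distinct copies $U^i,U^j$ lies in one of the four rectangle-edges for $\{i,j\}$, so $\iota(H)=1$; and $\{E_1,\dots,E_d\}$ covers $V$, giving $\rho(H)\le d$. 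The crux is the reverse bound $\rho(H)\ge d$: the natural attempt to witness it by $d$ mutually independent vertices is self-defeating, since such a witness would force $\iota(H)\ge d$ and collapse the ratio. Instead I would argue by volume. Every edge above has total component-length exactly $1$, so for Lebesgue measure $\lambda$ on $V$ (with $\lambda(V)=d$) one has $\lambda(e)\le 1$ for every edge; subadditivity then gives $d=\lambda(V)\le\sum_{e\in\mathcal C}\lambda(e)\le|\mathcal C|$ for any edge-cover $\mathcal C$. Hence $\rho(H)=d=d\,\iota(H)$, and the bound of Observation~\ref{rhoiota} is tight.
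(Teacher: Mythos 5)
Your proof is correct. The upper bound is argued exactly as in the paper: restrict to each copy $U^i$, note $\iota(H^i)\le\iota(H)$, apply Gallai's equality $\rho(H^i)=\iota(H^i)$ on each copy, and take the union of the $d$ covers (your explicit treatment of the degenerate case $\bigcup E\ne V$ is a small bonus the paper leaves implicit). The tightness construction, however, is genuinely different. The paper builds, for every $d$ \emph{and every} $n$, a hypergraph with $\iota(H)=n$ and $\rho(H)=dn$, using one system of $n$ long intervals and one system of $(dn)^2$ very short intervals per copy; you instead give a single example per $d$ with $\iota(H)=1$ and $\rho(H)=d$, using full copies and half-intervals. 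Both lower bounds on $\rho$ are measure arguments (total length of too few edges cannot reach $\lambda(V)=d$), and yours is clean and correct: every edge in your family has total length exactly $1$. Your version is simpler and fully suffices for the literal claim that the factor $d$ cannot be improved. What the paper's family buys additionally is tightness at every scale of $\iota$: it rules out not only a smaller multiplicative constant but also any bound of the form $\rho(H)\le\iota(H)+C_d$ or, more generally, any asymptotic improvement as $\iota(H)\to\infty$, which your single example with $\iota(H)=1$ does not address.
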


\begin{proof} For $i=1,\ldots,d$, let $H^i$ be the interval hypergraph on $U^i$ whose edges are the intersections of the edges of $H$ with $U^i$. Clearly $\iota(H^i) \le \iota(H)$, and so by Theorem \ref{gallai}(b) we have $\rho(H^i) \le \iota(H)$, $i=1,\ldots,d$. Together, these $d$ edge-covers yield $\rho(H) \le d \iota(H)$.

To see that this is tight, fix $d$ and $n$. We construct a $d$-interval hypergraph $H=(V,E)$ with $\iota(H)=n$ and $\rho(H)=dn$. For each $i=1,\ldots,d$, we use two systems of subintervals of $U^i$. The first consists of $J^i_j=[\frac{j-1}{n},\frac{j}{n}]$, $j=1,\ldots,n$; the second consists of $K^i_k=[\frac{k-1}{(dn)^2},\frac{k}{(dn)^2}]$, $k=1,\ldots,(dn)^2$. An edge $e \in E$ is determined by a choice of $i \in \{1,\ldots,d\}$, $j \in \{1,\ldots,n\}$, and $k_{i'} \in \{1,\ldots,(dn)^2\}$ for each $i' \in \{1,\ldots,d\} \setminus \{i\}$. The corresponding edge is $e=J^i_j \cup \bigcup_{i' \ne i} K^{i'}_{k_{i'}}$. Clearly, no strongly independent set can contain points in two distinct $U^i, U^{i'}$, and it can contain at most $n$ points in the same $U^i$; hence $\iota(H)=n$. The total length of an edge $e \in E$ is $\frac{1}{n} + \frac{d-1}{(dn)^2} = \frac{d^2n + d - 1}{(dn)^2}$, and hence the total length of $dn-1$ such edges is $\frac{(dn-1)(d^2n+d-1)}{(dn)^2} < d$. Thus, there is no edge-cover of size $dn-1$, and hence $\rho(H)=dn$.
\end{proof}

There is, however, another way of looking at Theorem \ref{gallai}, which will lead to a more interesting extension of part (b) to the case of $d$-interval hypergraphs.

A system of $n$ subintervals of $[0,1]$ of the form $$[0,c_1), (c_1,c_2), \ldots, (c_{n-2},c_{n-1}), (c_{n-1},1],$$ where $0 \le c_1 \le c_2 \le \cdots \le c_{n-2} \le c_{n-1} \le 1$, will be called an {\em $n$-partition} of $[0,1]$. The subintervals will be referred to as the first, the second, ..., the $(n-1)$-th, the $n$-th {\em cell} of the partition. Note that the $j$-th cell is empty if $c_{j-1}=c_j$ (with $c_0=0, c_n=1$).

Theorem \ref{gallai} can be equivalently re-stated as:

\begin{theorem}\label{galpart}
Let $H=(V,E)$ be an interval hypergraph, and let $n$ be a positive integer. Then:
\begin{itemize}
\item[(a)] If every $n$-partition of $[0,1]$ has a cell that contains an edge of $H$, then there exists an $n$-partition all of whose cells contain edges of $H$.
\item[(b)] If every $n$-partition of $[0,1]$ has a cell that is contained in an edge of $H$, then there exists an $n$-partition all of whose cells are contained in edges of $H$.
\end{itemize}
\end{theorem}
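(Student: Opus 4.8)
The plan is to deduce Theorem~\ref{galpart} from Theorem~\ref{gallai} by translating, for each fixed $n$, the two hypotheses and two conclusions into inequalities among the parameters $\nu,\tau,\iota,\rho$, and then invoking the equalities $\nu(H)=\tau(H)$ and $\iota(H)=\rho(H)$. Throughout I fix an $n$-partition with breakpoints $c_1,\ldots,c_{n-1}$ and cells $C_1,\ldots,C_n$, and use the basic observation that the cells are exactly the connected components of $[0,1]$ with the breakpoints deleted; in particular a closed edge $e$ is contained in some cell precisely when $e$ contains none of the $c_i$, and $e$ contains some cell precisely when some $\overline{C_j}\subseteq e$.

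For part (a) I would show that the hypothesis is equivalent to $\tau(H)\ge n$ and the conclusion to $\nu(H)\ge n$. Since an edge lies in a cell iff it avoids all breakpoints, an $n$-partition has \emph{no} cell containing an edge iff $\{c_1,\ldots,c_{n-1}\}$ meets every edge, i.e.\ iff this set of at most $n-1$ points is a transversal; taking complements, the hypothesis holds iff $\tau(H)\ge n$. On the other side, the cells are pairwise disjoint, so an $n$-partition all of whose cells contain edges yields a matching of size $n$, while conversely $n$ disjoint edges can be separated by breakpoints placed in the gaps between consecutive edges; hence the conclusion holds iff $\nu(H)\ge n$. Part (a) is then exactly the implication $\tau(H)\ge n\Rightarrow\nu(H)\ge n$, which follows from $\nu(H)=\tau(H)$.

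For part (b) the conclusion translates cleanly: as edges are closed, a cell contained in an edge has its closure in that edge, and these $n$ closures cover all of $[0,1]$, including the breakpoints; thus an $n$-partition all of whose cells lie in edges gives an edge-cover of size at most $n$, and conversely an edge-cover of size $\le n$ can be carved into at most $n$ cells, each inside a cover edge (padding with empty cells to reach exactly $n$), so the conclusion is equivalent to $\rho(H)\le n$. The hypothesis is the delicate point. The key lemma I would establish is that an $n$-partition has no cell contained in an edge iff the $n+1$ points $\{0,c_1,\ldots,c_{n-1},1\}$ are distinct and strongly independent: if $C_j\subseteq e$ then $e\supseteq\overline{C_j}$ contains two consecutive points of this set, while conversely if two of these points lie in a common edge $e$ then the interval between them, and hence a whole cell, lies in $e$. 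Therefore the hypothesis of (b) is equivalent to the nonexistence of a strongly independent set of size $n+1$ containing both endpoints $0$ and $1$. To link this to $\iota(H)$ I would prove that (when $\cup E=V$ and no single edge equals $[0,1]$) a largest strongly independent set may be taken to contain $0$ and $1$: starting from any strongly independent $S$, replacing $\min S$ by $0$ and $\max S$ by $1$ preserves strong independence, because an edge containing $0$ (resp.\ $1$) together with some other $s\in S$ would also contain the former minimum (resp.\ maximum). Hence the hypothesis of (b) is equivalent to $\iota(H)\le n$, and part (b) becomes $\iota(H)\le n\Rightarrow\rho(H)\le n$, which follows from $\iota(H)=\rho(H)$.

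The main obstacle is precisely this endpoint subtlety in (b): whereas the transversal witnessing (a) may be an arbitrary set of at most $n-1$ interior points, the strong-independence witness in (b) is \emph{forced} to include both endpoints $0$ and $1$, so one must argue that optimal independent sets can be normalized to contain them. I would also dispose of the degenerate cases separately. If $\cup E\ne V$, then picking interior breakpoints among points of $[0,1]\setminus\cup E$ produces, for every $n$, an $n$-partition with no cell in an edge, so the hypothesis of (b) fails and the implication holds vacuously (consistently with $\rho(H)=\infty$); and if some edge equals $[0,1]$, then every cell lies in that edge and $\rho(H)=1$, so both sides are trivial. Since all four translations are in fact equivalences, this argument simultaneously verifies that Theorem~\ref{galpart} is a faithful restatement of Theorem~\ref{gallai}.
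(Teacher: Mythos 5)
Your proof is correct, but it takes a genuinely different route from the paper's. The paper proves Theorem~\ref{galpart} directly by topology: part (a) via Corollary~\ref{kkmcor} (KKM) applied to the open sets $A_j$ of partitions whose $j$-th cell contains an edge, and part (b) via Theorem~\ref{sperner} applied to the closed sets $B_j$ of partitions whose $j$-th cell is contained in an edge; the whole point of that section is to rehearse, in one dimension, the machinery (KKM/Sperner, later Peleg) used for $d$-intervals. You instead reduce Theorem~\ref{galpart} to the constructively proven Theorem~\ref{gallai} by translating the hypotheses and conclusions into $\tau(H)\ge n$, $\nu(H)\ge n$, $\iota(H)\le n$, $\rho(H)\le n$. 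This is elementary, avoids topology entirely, and -- a real merit -- makes rigorous the paper's unproved remark that Theorem~\ref{galpart} is an ``equivalent re-statement'' of Theorem~\ref{gallai}; in particular you correctly isolate the one delicate point, namely that the strong-independence witness for (b) must be normalized to contain both endpoints $0$ and $1$ (and that an edge equal to $[0,1]$ must be excluded for that normalization to work). What your route does not buy is the paper's pedagogical preview of the topological argument. Two small points you should tighten: the claim that an edge-cover of size $\le n$ can be ``carved'' into $\le n$ cells each inside a cover edge needs the standard greedy sweep (repeatedly extend to the largest right endpoint among edges containing the current breakpoint; right endpoints strictly increase, so at most $m$ cells are produced); and in the degenerate case $\cup E\ne V$ you must choose $n-1$ \emph{distinct} breakpoints, which is possible because $[0,1]\setminus\cup E$ is relatively open (a finite union of closed intervals has open complement) and hence contains a nondegenerate interval -- coincident breakpoints would create empty cells, which are contained in every edge and would spoil the counterexample partition.
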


Now we generalize the partition concept to the setting where $V= \bigcup_{i=1}^d U^i$. A {\em $d \times n$-partition} of $V$ consists of $d$ $n$-partitions of $U^1,\ldots,U^d$ respectively. Its total number of cells is $dn$. Any union of $d$ cells, one from each $U^i$, will be called a {\em $d$-cell}. Thus, there are $n^d$ $d$-cells.

Part (a) of Theorem \ref{galpart} has the following $d$-counterpart:

\begin{theorem}\label{disjointcells}
Let $H=(V,E)$ be a $d$-interval hypergraph, $d \ge 2$, and let $n$ be a positive integer. If every $d \times n$-partition of $V$ has a $d$-cell that contains an edge of $H$, then there exists a $d \times n$-partition having at least $\frac{n}{d-1}$ disjoint $d$-cells that contain edges of $H$.
\end{theorem}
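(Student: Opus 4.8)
The plan is to parametrize the $d \times n$-partitions of $V$ by the product of simplices $\Pi = \Delta_{n-1} \times \cdots \times \Delta_{n-1}$ ($d$ factors), where a point $P = (x^1,\dots,x^d)$ records, in its $i$-th coordinate $x^i = (\ell^i_1,\dots,\ell^i_n)$, the lengths of the $n$ cells of the $n$-partition of $U^i$ (the cutpoints being the partial sums). Under this identification the non-empty cells of $U^i$ are exactly the coordinates in the support of $x^i$, and the $n^d$ $d$-cells correspond precisely to the \emph{vertices} of $\Pi$: a vertex is a choice of one simplex-vertex $j_i \in \{1,\dots,n\}$ in each factor, i.e.\ a tuple $\sigma = (j_1,\dots,j_d)$. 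The hypothesis says that every $P \in \Pi$ admits a $d$-cell containing an edge; I call such a $d$-cell \emph{good} for $P$.

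First I would set up a KKM-type closed cover of $\Pi$ indexed by the vertices of $\Pi$. For each $d$-cell $\sigma$ put $C_\sigma = \{P \in \Pi : \sigma \text{ is good for } P\}$. Each $C_\sigma$ is closed: for a single edge $e = \bigcup_i I^i$, the requirement that $I^i$ lie inside cell $j_i$ of $U^i$ is a conjunction of two weak inequalities on consecutive cutpoints, hence a closed condition on $P$, and $C_\sigma$ is the finite union over $e$ of these closed sets. The crucial point is to verify the KKM boundary condition on every face of $\Pi$. A face has the form $\prod_i \operatorname{conv}(S_i)$ with $S_i \subseteq \{1,\dots,n\}$, its vertices being the $d$-cells $\sigma$ with $\sigma_i \in S_i$ for all $i$, and a point $P$ lies in it exactly when $\operatorname{supp}(x^i) \subseteq S_i$ for all $i$. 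For such $P$ the hypothesis gives a good $d$-cell $\sigma^\ast$; since the witnessing edge has a non-empty part inside cell $\sigma^\ast_i$, that cell is non-empty, so $\sigma^\ast_i \in \operatorname{supp}(x^i) \subseteq S_i$. Thus $\sigma^\ast$ is a vertex of the face and $P \in C_{\sigma^\ast}$, which is exactly the required covering condition.

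Now I would invoke Peleg's extension of the KKM theorem to products of simplices. It produces a point $P^\ast \in \Pi$ together with a family $\mathcal G$ of good $d$-cells (those $\sigma$ with $P^\ast \in C_\sigma$) that are vertices of the carrier of $P^\ast$ and positively span $P^\ast$ inside that carrier. Reading off the coordinates, this forces the projections of $\mathcal G$ to be onto: in each coordinate $i$, every non-empty cell of $U^i$ occurs as $\sigma_i$ for some $\sigma \in \mathcal G$. After a preliminary reduction ensuring that all $dn$ cells of $P^\ast$ are non-empty (so the carrier is all of $\Pi$), I obtain a single $d \times n$-partition whose good $d$-cells cover all $n$ cells in each of the $d$ coordinates.

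The remaining, and I expect hardest, step is the purely combinatorial extraction of $\frac{n}{d-1}$ pairwise \emph{disjoint} good $d$-cells from $\mathcal G$. Two $d$-cells are disjoint precisely when they differ in every coordinate, so this asks for a matching in the $d$-partite $d$-uniform ``goodness'' hypergraph on the $dn$ cells. A maximal disjoint family $\mathcal D$ uses $|\mathcal D|$ cells in each coordinate, and by maximality together with the covering property every unused cell is a coordinate of some good $d$-cell that meets the cells used by $\mathcal D$; a naive incidence count of this kind already yields a matching linear in $n$, but with a constant weaker than $\frac1{d-1}$. The difficulty is that mere covering is \emph{not} enough to reach the sharp constant: one can exhibit covering families (already for $d=2$) whose matching number falls well short of $n$. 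Hence the gain up to $\frac{n}{d-1}$ must come from exploiting the linear order of the cells within each $U^i$ (as in the Gallai-type arguments of Theorem \ref{gallai}) and/or an augmenting/exchange argument in the spirit of the Tardos--Kaiser bound, re-choosing the members of $\mathcal D$ whenever an unused cell would otherwise be blocked. Pinning down this extraction with the exact constant $\frac1{d-1}$, and verifying that the full-support reduction in the previous step does not cost cells, are the two places where I expect the real work to lie.
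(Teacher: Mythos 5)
Your setup (parametrizing partitions by $(\Delta_{n-1})^d$ and covering it by the sets of points for which a given $d$-cell is ``good'') matches the paper's, but the proof is not complete: the step you yourself flag as the hardest --- extracting $\frac{n}{d-1}$ pairwise disjoint good $d$-cells --- is exactly where the paper's key idea lives, and the route you sketch for it does not lead there. The information you extract from the topological step is only that the good $d$-cells cover all $dn$ cells (plus convex-combination weights summing to $1$), and as you correctly observe, a covering family of a $d$-partite $d$-uniform hypergraph can have matching number far below $n/(d-1)$. Your proposed fixes (exploiting the linear order of the cells, or an augmenting/exchange argument) are not what is needed. The paper instead applies the product-KKM theorem not to the cover $\{C_\sigma\}_{\sigma\in[n]^d}$ indexed by $d$-cells, but to $dn$ \emph{auxiliary} sets $A^i_j$: one forms the array $s_{\vj}(\vx)=\dist(\vx,Y_{\vj}^c)$ (where $Y_{\vj}$ is your $C_{\vj}$), and declares $\vx\in A^i_j$ when the $j$-th layer in direction $i$ has maximal layer-sum. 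Corollary \ref{pelegcor} then yields a point at which \emph{all} layer-sums are equal to some $a>0$; normalizing by $a$ gives a perfect fractional matching of total value $n$ in the auxiliary $d$-partite hypergraph of good $d$-cells, and F\"uredi's bound $\nu\ge\nu^*/(d-1)$ (Theorem \ref{furedi}) delivers the matching of size $\frac{n}{d-1}$ immediately. This equalization of marginals is the missing idea; the convex-position weights coming from a Komiya-type polytopal KKM statement have marginals $x^i_j$ rather than the uniform $\frac{1}{n}$, so they cannot be rescaled into a fractional matching of value $n$.

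Two smaller points. First, the theorem you invoke (a cover indexed by the $n^d$ vertices of the product polytope, producing a point positively spanned by the indices of the sets containing it) is a polytopal KKM statement of Komiya type, not Peleg's theorem as used here: Peleg's theorem concerns $dn$ sets, one per vertex of each simplex factor, with the covering hypothesis imposed on the subpolytopes $P_{d,n}(i,S)$. Second, your sets $C_\sigma$ are open rather than closed: the cells of a partition are open at their cut-points, so ``the closed interval $I^i$ is contained in the $j_i$-th cell'' is a strict-inequality condition. This is harmless for KKM-type arguments (and the paper works with the open sets $Y_{\vj}$ directly, converting to closed sets only at the level of the $A^i_j$), but the justification you give for closedness is incorrect.
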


Essentially, Theorem \ref{t:kaiser} was proved in \cite{tardos, kaiser} by establishing Theorem \ref{disjointcells}, and then applying it with the largest $n$ such that $d(n-1) < \tau(H)$.

Our main result is the following $d$-counterpart of Theorem \ref{galpart}(b):

\begin{theorem}\label{maintheorem}
 Let $H=(V,E)$ be  a $d$-interval hypergraph, and let $n$ be a positive integer. Assume that every $d \times n$-partition of $V$ has a $d$-cell that is contained in an edge of $H$. Then there exists a $d \times n$-partition all of whose cells are contained in edges of $H_0$, where $H_0$ is a subhypergraph of $H$ of size at most $(1+\ln d)n$. In particular, $\rho(H) \le (1+\ln d)n$. \\Moreover, for $d=2$ the upper bound on the size of $H_0$ can be improved to $n$, and hence $\rho(H) \le n$ for $d=2$.
\end{theorem}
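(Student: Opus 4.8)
The plan is to parametrize $d\times n$-partitions by the product of simplices $P=(\Delta^{n-1})^d$: a point $x=(x^1,\dots,x^d)$ with $x^i=(x^i_1,\dots,x^i_n)$, $x^i_j\ge 0$, $\sum_j x^i_j=1$, encodes the partition whose $j$-th cell in $U^i$ has length $x^i_j$ (so the $j$-th breakpoint in $U^i$ is $x^i_1+\cdots+x^i_j$), and the vertex of $P$ with $x^i_j=1$ makes the $j$-th cell of $U^i$ all of $U^i$. For a cell index pair $(i,j)$ let $B^i_j\subseteq P$ be the set of partitions in which the $j$-th cell of $U^i$ is contained in some edge of $H$, with the convention that an empty cell is contained in every edge, so $\{x^i_j=0\}\subseteq B^i_j$. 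One checks that each $B^i_j$ is closed and that, for each fixed $i$, the family $\{B^i_j\}_{j=1}^n$ is a KKM cover of $P$ in the $i$-th factor: the covering $\bigcup_j B^i_j=P$ is exactly the hypothesis, since a $d$-cell contained in an edge yields, in \emph{each} copy $i$, one cell contained in that single edge; and the boundary condition holds because on a face where the cells outside a set $S$ are empty, those cells lie in the corresponding $B^i_j$ for free, while the hypothesis supplies a genuinely nonempty covered cell whose index must lie in $S$.

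I would then feed these $d$ KKM covers into Peleg's product-KKM theorem. The output I am after is \emph{not} a single common point of all $dn$ sets — the dimension of $P$ forbids that — but a balanced point: a partition $x^*$ together with nonnegative weights $y_e$ on the edges, $\sum_e y_e=n$, such that to each edge $e$ with $y_e>0$ one can attach a $d$-cell $D_e\subseteq e$ of $x^*$ (one cell per copy) so that every cell of $x^*$ receives total weight at least $1$ from the $d$-cells covering it. In short, Peleg's theorem is to be converted, via LP duality against the hypothesis, into a \emph{fractional edge cover of the cells of $x^*$ by $d$-cells, of total weight $n$}. The value $n$ is the correct target: each $d$-cell feeds weight to $d$ cells and there are $dn$ cells to cover, so the weight can never be below $n$, and the topological input is precisely what makes $n$ attainable.

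With the fractional cover in hand the rest is standard rounding. Regard the nonempty cells of $x^*$ as a universe and the available $d$-cells (each contained in an edge) as sets; every such set has size at most $d$, and $\{y_e\}$ is a feasible fractional set cover of value $n$, so the set-cover LP optimum is at most $n$. The greedy algorithm, analyzed by dual fitting, produces an integral cover of cost at most $H_d\le 1+\ln d$ times the LP optimum, hence selects at most $(1+\ln d)n$ of the $d$-cells; choosing an edge containing each gives $H_0$ with $|H_0|\le(1+\ln d)n$, and since the selected $d$-cells cover all nonempty cells of $x^*$, their edges cover $V$, so $\rho(H)\le(1+\ln d)n$. For $d=2$ the $d$-cells are the edges of a \emph{bipartite} graph between the $n$ cells of $U^1$ and the $n$ cells of $U^2$, and a weight-$n$ fractional edge cover is a fractional perfect cover; integrality of the bipartite edge-cover polytope (equivalently König's theorem) then yields an integral edge cover of size $n$, i.e.\ a perfect matching of the $2n$ cells, improving the bound to $n$.

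The main obstacle is the middle step: extracting from Peleg's theorem exactly the weight-$n$ fractional cover by $d$-cells. The KKM bookkeeping (empty cells, whether cells are taken closed or half-open, and continuity of the containment relation as $x$ varies) is fiddly but routine; the real work is to show that the balanced point guaranteed by the product-KKM theorem carries a fractional structure of total weight \emph{precisely} $n$ rather than the trivial $dn$, and to package this as an LP-duality statement whose infeasible alternative is exactly what the hypothesis forbids. Once the value $n$ is secured, the greedy bound $H_d$ and the bipartite matching argument for $d=2$ are entirely standard.
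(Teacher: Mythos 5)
Your overall architecture matches the paper's: parametrize $d\times n$-partitions by $(\Delta_{n-1})^d$, use a dual (Sperner-type) version of Peleg's product-KKM theorem, extract a fractional edge-cover of the cells by $d$-cells of total weight exactly $n$, and round via the greedy/Lov\'asz bound $1+\ln d$ (resp.\ bipartite integrality, i.e.\ Gallai's $\rho=\rho^*$, for $d=2$). You have also correctly identified the exact intermediate statement that makes the bound $(1+\ln d)n$ rather than the trivial $dn$: a weight-$n$ fractional cover of the $dn$ cells by $d$-cells each contained in an edge.

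However, the step you yourself flag as ``the main obstacle'' --- producing that weight-$n$ fractional cover from the topology --- is precisely the content of the proof, and it is absent. Your sets $B^i_j$ (``the $j$-th cell of $U^i$ is contained in some edge'') are the wrong sets to feed into Peleg: a common point of all of them only says each cell is \emph{individually} contained in some edge, which gives $\rho(H)\le dn$ and carries no balanced $d$-cell structure; no amount of LP duality applied afterwards recovers the value $n$, because the coupling between the $d$ coordinates of each edge has been discarded. The paper's resolution is to keep the sets indexed by $d$-cells: let $Z_{\vec{j}}$, $\vec{j}\in[n]^d$, be the (closed) set of partitions whose $\vec{j}$-th $d$-cell lies in an edge, form the $\e$-smoothed array $t_{\vec{j}}(\vx,\e)=\max(1-\dist(\vx,Z_{\vec{j}})/\e,0)$ (the smoothing is needed for continuity, hence closedness of the sets below), and \emph{define} $B^i_j(\e)$ as the set of $\vx$ where the $j$-th layer in direction $i$ of this array has maximal layer-sum. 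The boundary condition $x^i_j=0\Rightarrow\vx\in B^i_j(\e)$ then requires a genuine argument (a monotonicity claim comparing $\dist(\vx,Z_{\vec{j}})$ and $\dist(\vx,Z_{\vec{k}})$ when the $i$-th cell is empty), and the common point delivered by Peleg has all $dn$ layer-sums equal to some $a(\e)>0$; dividing by $a(\e)$ gives exactly the weight-$n$ fractional cover, and a compactness argument as $\e\to 0$ finishes. Without this (or an equivalent) construction, your proposal is a correct plan with its central lemma unproved.
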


Although Theorem \ref{disjointcells} is not new, we give here a new proof of it, which goes along similar lines to our proof of Theorem \ref{maintheorem}. Both proofs are topological, using extensions due to Peleg \cite{peleg, peleg2} of the well-known KKM theorem \cite{kkm}.

In Section \ref{ps} we recall the KKM theorem about coverings of a simplex, and a dual variant due to Sperner \cite{sperner}. Then, as a warm-up to our main proofs, we apply them to get topological proofs of both parts of Theorem \ref{galpart} (and thus of Theorem \ref{gallai}). Then, in Section \ref{tools}, we recall the extensions due to Peleg of the KKM/Sperner theorems to coverings of a product of simplices. We also recall some well-known bounds on the ratio between integer and fractional versions of the matching and transversal numbers of hypergraphs. Equipped with these tools, we prove Theorems \ref{disjointcells} and \ref{maintheorem} in Section \ref{main}. In the final Section \ref{examples}, we discuss the tightness of the bound in Theorem \ref{maintheorem}. We show that $\rho(H) \le n$ is best possible for $d=2$, and that it does not hold anymore for $d>2$. The $(1+\ln d)n$ upper bound for general $d$ can be somewhat improved, but we do not know by how much.

\section{Topological proofs for interval hypergraphs}\label{ps}

We denote by $\Delta_{n-1}$ the standard simplex in $\mathbb{R}^n$, namely
\[ \Delta_{n-1} = \{\vec{x}=(x_1,\ldots,x_n) \mid \sum_{j=1}^n x_j =1, x_j \ge 0, j=1,\ldots,n\}. \]
For $S\subseteq [n]=\{1,\ldots,n\}$, $S \ne \emptyset$, we denote by $F(S)$ the face of
$\Delta_{n-1}$ spanned by the corresponding unit vectors, i.e.,
\[ F(S) = \{\vec{x}=(x_1,\ldots,x_n) \in \Delta_{n-1} \mid x_j=0 \textrm{ for all }j \not \in S\}. \]
%Let

\begin{theorem}[KKM \cite{kkm}]\label{kkm}
Let $A_1, \ldots ,A_n$ be subsets of $\Delta_{n-1}$ that are all closed or all open. Suppose that for every $\emptyset \ne S \subseteq [n]$ we have $F(S)\subseteq \bigcup_{j \in S}A_j$. Then $\bigcap_{j=1}^n A_j \neq \emptyset$.
\end{theorem}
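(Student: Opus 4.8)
The plan is to handle the two cases of the hypothesis separately: I would first prove the \emph{closed} case by the classical route through Sperner's lemma together with a limiting argument, and then reduce the \emph{open} case to the closed one by explicitly shrinking the $A_j$ to closed subsets that retain the full covering hypothesis. The one external ingredient I would invoke is Sperner's lemma in its combinatorial form: if $T$ is a triangulation of $\Delta_{n-1}$ and each vertex $v$ of $T$ receives a label $\ell(v)\in\{j : v_j>0\}$ taken from its own support, then $T$ contains a cell whose $n$ vertices carry all $n$ labels. I would take this as a known fact (the standard parity count of fully-labelled boundary faces), since it is the combinatorial heart of the statement.

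\textbf{Closed case.} First I would fix a sequence of triangulations $T_m$ of $\Delta_{n-1}$ with mesh tending to $0$ (iterated barycentric subdivisions, say). For each vertex $v$ of $T_m$ put $S=\{j : v_j>0\}$; then $v\in F(S)$, so the hypothesis gives $v\in\bigcup_{j\in S}A_j$, and I may choose a label $\ell_m(v)\in S$ with $v\in A_{\ell_m(v)}$. This is a legitimate Sperner labelling, so $T_m$ has a rainbow cell $\sigma_m$ with vertices $v_m^1,\dots,v_m^n$ satisfying $v_m^j\in A_j$ for every $j$. Since $\mathrm{diam}(\sigma_m)\to 0$, compactness of $\Delta_{n-1}$ lets me pass to a subsequence along which all $v_m^j$ converge to a single point $\vx^\ast$. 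Because each $A_j$ is closed and $v_m^j\in A_j$, we get $\vx^\ast\in A_j$ for all $j$, hence $\vx^\ast\in\bigcap_{j=1}^n A_j$.

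\textbf{Open case.} Here the limit point produced above need only lie in $\overline{A_j}$, so I would reduce to the closed case by shrinking. Taking $S=[n]$ in the hypothesis already gives $\Delta_{n-1}=\bigcup_j A_j$. For each $j$ let $g_j$ be a continuous nonnegative function on $\Delta_{n-1}$ with $\{g_j>0\}=A_j$, for instance $g_j(\vx)=\min\{1,\dist(\vx,\Delta_{n-1}\setminus A_j)\}$. Set $h_j(\vx)=x_j\,g_j(\vx)$ and define the closed sets
\[ C_j=\Bigl\{\vx\in\Delta_{n-1} : h_j(\vx)=\max_{1\le k\le n}h_k(\vx)\Bigr\}. \]
The crucial point is that $\max_k h_k(\vx)>0$ for every $\vx$: with $S=\{k:x_k>0\}$ the hypothesis produces some $j\in S$ with $\vx\in A_j$, so $x_j>0$ and $g_j(\vx)>0$, whence $h_j(\vx)>0$. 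Thus $\vx\in C_j$ forces $h_j(\vx)>0$, i.e.\ $\vx\in A_j$, giving $C_j\subseteq A_j$. Moreover, for $\vx\in F(S)$ the maximizer $j^\ast$ of $h_k(\vx)$ has $h_{j^\ast}(\vx)>0$, hence $x_{j^\ast}>0$, so $j^\ast\in\mathrm{supp}(\vx)\subseteq S$ and $\vx\in C_{j^\ast}$; therefore $F(S)\subseteq\bigcup_{j\in S}C_j$. Applying the closed case to $\{C_j\}$ yields $\bigcap_j C_j\ne\emptyset$, and since $C_j\subseteq A_j$ we conclude $\bigcap_j A_j\ne\emptyset$.

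The hard part will be the open case, not the closed one. The closed case is routine once Sperner's lemma is granted; the only thing to verify carefully is that the covering hypothesis $F(S)\subseteq\bigcup_{j\in S}A_j$ is \emph{exactly} what makes the support-restricted labelling legal. The delicate step is the shrinking: one must exhibit closed sets $C_j\subseteq A_j$ that still satisfy the entire family of face conditions simultaneously, and the weighting $h_j=x_j g_j$ is engineered precisely for this — the factor $x_j$ kills the contributions of coordinates outside $\mathrm{supp}(\vx)$, forcing the maximizer into $S$, while positivity of the maximum (a direct consequence of the hypothesis) guarantees $C_j\subseteq A_j$. The only minor technical point to state cleanly is the convention making $g_j$ finite when $A_j=\Delta_{n-1}$, which the bounded choice above dispatches.
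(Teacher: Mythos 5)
Your proof is correct, but note that the paper does not prove this statement at all: Theorem~\ref{kkm} is quoted as a classical result with a citation to Knaster--Kuratowski--Mazurkiewicz, and is used later purely as a black box (indeed, even its product-of-simplices extensions, Theorems~\ref{peleg} and~\ref{dualpeleg}, are imported from Peleg's papers without proof). So there is no paper proof to compare against; what you have supplied is essentially the standard argument, and it is sound. The closed case via support-restricted Sperner labellings of triangulations with mesh tending to zero, followed by a compactness limit, is exactly the classical route. Your open case is the part requiring real care, and your reduction works: the sets $C_j=\{\vx : x_j g_j(\vx)=\max_k x_k g_k(\vx)\}$ are closed, the positivity of the maximum follows from the hypothesis applied to $S=\mathrm{supp}(\vx)$, the factor $x_j$ correctly forces the maximizer into the support so that the face conditions $F(S)\subseteq\bigcup_{j\in S}C_j$ persist, and $C_j\subseteq A_j$ then lets you transfer the nonempty intersection back. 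One structural remark: your shrinking device (multiplying an indicator-like function by the coordinate $x_j$ to control where maximizers can live) is the same trick, in miniature, that the paper uses in the proofs of Theorems~\ref{disjointcells} and~\ref{maintheorem}, where layer-sums of distance-based arrays are maximized and the vanishing of a coordinate is shown to force or forbid membership in the relevant set; so your argument is stylistically very much in the spirit of the paper even though the paper never proves this particular theorem.
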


We shall use the following immediate corollary of the KKM theorem:

\begin{corollary}\label{kkmcor}
Let $A_1,\ldots,A_n$ be subsets of $\Delta_{n-1}$ that are all closed or all open. Suppose that
\begin{itemize}
\item[(a)] $\bigcup_{j=1}^n A_j = \Delta_{n-1}$, and
\item[(b)] for all $\vx =(x_1,\ldots,x_n) \in \Delta_{n-1}$ and all $j \in [n]$, $x_j=0 \Rightarrow \vx \notin A_j$.
\end{itemize}
Then $\bigcap_{j=1}^n A_j \ne \emptyset$.
\end{corollary}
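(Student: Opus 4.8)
The plan is to deduce this directly from the KKM theorem (Theorem \ref{kkm}) by checking its covering hypothesis. Since the sets $A_1,\ldots,A_n$ are assumed to be all closed or all open, the only thing that needs verification is that for every nonempty $S \subseteq [n]$ one has $F(S) \subseteq \bigcup_{j \in S} A_j$. Conditions (a) and (b) are precisely designed to repackage this face-covering requirement, so the work amounts to a short contrapositive argument.

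First I would fix a nonempty $S \subseteq [n]$ and an arbitrary point $\vx = (x_1,\ldots,x_n) \in F(S)$. By the definition of $F(S)$ we have $x_j = 0$ for every $j \notin S$. By hypothesis (a) the sets $A_1,\ldots,A_n$ cover $\Delta_{n-1}$, so $\vx \in A_j$ for at least one index $j$. Next I would argue that any such $j$ must lie in $S$: if instead $j \notin S$, then $x_j = 0$, and hypothesis (b) would force $\vx \notin A_j$, contradicting $\vx \in A_j$. Hence $j \in S$, so $\vx \in \bigcup_{j \in S} A_j$. As $\vx \in F(S)$ was arbitrary, this gives $F(S) \subseteq \bigcup_{j \in S} A_j$, and since $S$ was an arbitrary nonempty subset, the covering hypothesis of Theorem \ref{kkm} holds.

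Finally, applying Theorem \ref{kkm} to $A_1,\ldots,A_n$ yields $\bigcap_{j=1}^n A_j \ne \emptyset$, which is the desired conclusion. There is no genuine obstacle here: all the topological content lives in the KKM theorem itself, and the corollary is merely a convenient reformulation in which the two stated conditions encode the face condition $F(S) \subseteq \bigcup_{j \in S} A_j$ through the elementary implication that a covering set containing a point of $F(S)$ cannot be indexed outside $S$.
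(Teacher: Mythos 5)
Your argument is correct and is exactly the ``immediate'' derivation the paper has in mind (the paper states this corollary without proof, as a direct consequence of Theorem \ref{kkm}): conditions (a) and (b) together yield $F(S)\subseteq\bigcup_{j\in S}A_j$ for every nonempty $S$, since a point of $F(S)$ lies in some $A_j$ by (a), and (b) rules out any index $j\notin S$ because such coordinates vanish on $F(S)$.
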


There is a dual variant which differs from Corollary \ref{kkmcor} by reversing the conclusion of $x_j=0$:

\begin{theorem}[Sperner \cite{sperner}]\label{sperner}
Let $B_1, \ldots ,B_n$ be subsets of $\Delta_{n-1}$ that are all closed or all open. Suppose that
\begin{itemize}
\item[(a)] $\bigcup_{j=1}^n B_j = \Delta_{n-1}$, and
\item[(b)] for all $\vx =(x_1,\ldots,x_n) \in \Delta_{n-1}$ and all $j \in [n]$, $x_j=0 \Rightarrow \vx \in B_j$.
\end{itemize}
Then $\bigcap_{j=1}^n B_j \ne \emptyset$.
\end{theorem}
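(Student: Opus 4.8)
The plan is to derive Sperner's theorem directly from Corollary \ref{kkmcor} by passing to complements, exploiting the fact that the two statements are formally dual: Corollary \ref{kkmcor} imposes a boundary condition that \emph{excludes} a point from $A_j$ whenever $x_j=0$, while here the boundary condition \emph{includes} the point in $B_j$. First I would set $A_j := \Delta_{n-1}\setminus B_j$ for each $j\in[n]$, with complements taken inside $\Delta_{n-1}$. If the $B_j$ are all closed then the $A_j$ are all open, and if the $B_j$ are all open then the $A_j$ are all closed; in either case the family $\{A_j\}$ satisfies the topological hypothesis of Corollary \ref{kkmcor}. Moreover, since $\vx\in B_j$ is the same as $\vx\notin A_j$, hypothesis (b) of the present theorem, namely $x_j=0\Rightarrow\vx\in B_j$, is literally hypothesis (b) of Corollary \ref{kkmcor} for the family $\{A_j\}$.

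Next I would argue by contradiction. Suppose the conclusion fails, i.e.\ $\bigcap_{j=1}^n B_j=\emptyset$. Taking complements and applying De Morgan's law gives $\bigcup_{j=1}^n A_j=\Delta_{n-1}$, which is precisely hypothesis (a) of Corollary \ref{kkmcor}. Thus both hypotheses of the corollary hold for $\{A_j\}$, and it yields $\bigcap_{j=1}^n A_j\neq\emptyset$. On the other hand, De Morgan's law together with hypothesis (a) of the present theorem, namely $\bigcup_{j=1}^n B_j=\Delta_{n-1}$, gives $\bigcap_{j=1}^n A_j=\Delta_{n-1}\setminus\bigcup_{j=1}^n B_j=\emptyset$. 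This contradiction forces $\bigcap_{j=1}^n B_j\neq\emptyset$, as desired.

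There is no genuinely hard step: under complementation the content of Sperner's theorem coincides exactly with that of the KKM corollary. The only points requiring care are the bookkeeping of which De Morgan identity corresponds to which hypothesis, and the observation that complementation interchanges the ``all closed'' and ``all open'' alternatives, so that the topological hypothesis of Corollary \ref{kkmcor} is automatically preserved. It is worth noting that hypothesis (a) of each of the two theorems matches the negation of the other's conclusion, so the passage is an exact equivalence rather than a one-directional implication.
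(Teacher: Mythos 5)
Your proof is correct and is exactly the derivation the paper indicates: the paper's one-line remark after the statement says Theorem \ref{sperner} can be derived from Corollary \ref{kkmcor} by letting $A_j = \Delta_{n-1}\setminus B_j$, and your proposal simply fills in the contrapositive bookkeeping (complementation swaps open/closed, matches the boundary conditions, and exchanges each result's hypothesis (a) with the negation of the other's conclusion). Nothing is missing; this is the same approach, spelled out in full.
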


It is possible to derive Theorem \ref{sperner} from Corollary \ref{kkmcor} by letting $A_j = \Delta_{n-1} \setminus B_j$.

There is a canonical bijection between $n$-partitions of $[0,1]$, as defined in the Introduction, and points in $\Delta_{n-1}$. Namely, the $n$-partition $$[0,c_1), (c_1,c_2), \ldots, (c_{n-2},c_{n-1}), (c_{n-1},1]$$ corresponds to the point $\vx =(x_1,x_2,\ldots,x_{n-1},x_n)$, where $x_j = c_j - c_{j-1}$ is the length of the $j$-th cell (with $c_0=0,c_n=1$). In the following, we identify every $n$-partition of $[0,1]$ with the corresponding $\vx \in \Delta_{n-1}$, and use the topological results above to prove Theorem \ref{galpart}.

\medskip

{\em Proof of Theorem \ref{galpart}(a) from Corollary \ref{kkmcor}:} Given the interval hypergraph $H=(V,E)$ and the positive integer $n$, we define $A_j \subseteq \Delta_{n-1}$ for $j \in [n]$ by:
\[ \vx \in A_j \,\,\Leftrightarrow \textrm{ the }j\textrm{-th cell of the }n\textrm{-partition }\vx\textrm{ contains an edge of }H \]
It is easy to check that the sets $A_1,\ldots,A_n$ are open, and that the premise of Theorem \ref{galpart}(a) renders them a covering of $\Delta_{n-1}$. Moreover, condition (b) of Corollary \ref{kkmcor} holds, since $x_j=0$ means that the $j$-th cell is empty. It follows that there exists $\vx \in \bigcap_{j=1}^n A_j$. Then, all cells of the $n$-partition corresponding to $\vx$ contain edges of $H$, as desired. \hspace{\stretch{1}}$\square$

\medskip

{\em Proof of Theorem \ref{galpart}(b) from Theorem \ref{sperner}:} Given the interval hypergraph $H=(V,E)$ and the positive integer $n$, we define $B_j \subseteq \Delta_{n-1}$ for $j \in [n]$ by:
\[ \vx \in B_j \,\,\Leftrightarrow \textrm{ the }j\textrm{-th cell of the }n\textrm{-partition }\vx\textrm{ is contained in an edge of }H \]
It is easy to check that the sets $B_1,\ldots,B_n$ are closed, and that the premise of Theorem \ref{galpart}(b) renders them a covering of $\Delta_{n-1}$. Moreover, condition (b) of Theorem \ref{sperner} holds, since an empty cell is contained in any edge. It follows that there exists $\vx \in \bigcap_{j=1}^n B_j$, meaning that all cells of the corresponding $n$-partition are contained in edges of $H$. \hspace{\stretch{1}}$\square$

\section{Tools for handling $d$-interval hypergraphs}\label{tools}

We denote by $P_{d,n}$ the Cartesian product of $d$ copies of $\Delta_{n-1}$, i.e.,
\[ P_{d,n} = \Delta_{n-1}\times \Delta_{n-1} \times \cdots \times \Delta_{n-1}=(\Delta_{n-1})^d. \]
For $i \in [d]$ and $\emptyset \ne S\subseteq [n]$ we denote by $P_{d,n}(i,S)$ the polytope obtained from $P_{d,n}$ by replacing the $i$-th factor in the product by its face $F(S)$, namely
$$P_{d,n}(i,S)=\Delta_{n-1}\times \cdots \times F(S) \times \cdots \times \Delta_{n-1}.$$

\begin{theorem}[Peleg \cite{peleg}]\label{peleg}
Let $A^i_j$, $i=1,\ldots,d$, $j=1,\ldots,n$, be closed subsets of $P_{d,n}$. Suppose that for every $i \in [d]$ and $\emptyset \ne S\subseteq [n]$ we have $P_{d,n}(i,S)\subseteq \bigcup_{j \in S} A^i_j.$
Then
$\bigcap_{i=1}^d \bigcap_{j=1}^n A^i_j \neq \emptyset$.
\end{theorem}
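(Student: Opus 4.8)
The plan is to deduce Peleg's theorem from Brouwer's fixed point theorem, running in parallel across all $d$ factors the standard Brouwer-based proof of the ordinary KKM theorem. First I record that every $A^i_j$ is nonempty: applying the hypothesis with the singleton $S=\{j\}$ gives $P_{d,n}(i,\{j\})\subseteq A^i_j$, and $P_{d,n}(i,\{j\})$ (the locus where the $i$-th factor equals the vertex $e_j$) is certainly nonempty. Consequently the distance functions $d^i_j(\vec p)=\dist(\vec p,A^i_j)$ are finite and continuous on the compact convex set $P_{d,n}$.

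Next I would build a continuous self-map $f\colon P_{d,n}\to P_{d,n}$ that, block by block, nudges each factor toward the sets it currently misses. Writing a generic point as $\vec p=(\vx^1,\dots,\vx^d)$ with $\vx^i=(x^i_1,\dots,x^i_n)$, I set
\[
f(\vec p)^i_j=\frac{x^i_j+d^i_j(\vec p)}{1+\sum_{k=1}^n d^i_k(\vec p)}.
\]
For each fixed $i$ the $n$ numbers $f(\vec p)^i_j$ are nonnegative and sum to $1$, so the $i$-th block lies in $\Delta_{n-1}$ and $f$ indeed maps $P_{d,n}$ into itself; continuity is clear from that of the $d^i_j$. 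Since $P_{d,n}$ is compact, convex, and homeomorphic to a closed ball, Brouwer's theorem supplies a fixed point $\vec p^{\,*}$.

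The heart of the argument is a coordinatewise analysis of $\vec p^{\,*}$, carried out independently for each $i$. Writing now $x^i_j$ for the coordinates of $\vec p^{\,*}$, the fixed-point equation rearranges to $d^i_j(\vec p^{\,*})=D^i x^i_j$ for all $j$, where $D^i=\sum_k d^i_k(\vec p^{\,*})$. Let $S^i=\{j:x^i_j>0\}$ be the support of the $i$-th block. By definition $\vec p^{\,*}\in P_{d,n}(i,S^i)$, so the covering hypothesis yields $\vec p^{\,*}\in A^i_j$ for some $j\in S^i$; for that $j$ we get $d^i_j(\vec p^{\,*})=0=D^i x^i_j$ with $x^i_j>0$, forcing $D^i=0$. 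Hence $d^i_k(\vec p^{\,*})=0$ for every $k$, i.e. $\vec p^{\,*}\in\bigcap_k A^i_k$. Since $i$ was arbitrary, $\vec p^{\,*}\in\bigcap_{i=1}^d\bigcap_{j=1}^n A^i_j$, as required.

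I expect the only genuine subtlety to be verifying that a single map $f$ can simultaneously ``close up'' in all $d$ factors: the construction deliberately normalizes each block separately, so that the $d$ copies of the KKM argument proceed without interfering, and the product covering hypothesis $P_{d,n}(i,S)\subseteq\bigcup_{j\in S}A^i_j$ is precisely what the fixed-point analysis needs in each factor. An alternative, more combinatorial route would be to establish a product version of Sperner's lemma by triangulating each $\Delta_{n-1}$ and passing to a limit, but the Brouwer argument above appears cleaner and avoids the bookkeeping of product labelings.
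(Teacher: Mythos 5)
Your argument is correct, but there is nothing in the paper to compare it against: Theorem \ref{peleg} is stated as a quoted result of Peleg and the paper gives no proof of it (nor, for that matter, of the KKM theorem or of Theorem \ref{dualpeleg}), using all three as black boxes. What you have written is a complete, self-contained derivation from Brouwer's fixed point theorem, obtained by running the standard Brouwer-to-KKM map blockwise on the product $P_{d,n}$. The verification goes through: each $A^i_j$ contains the nonempty set $P_{d,n}(i,\{j\})$, so the distance functions are continuous; each block of $f(\vec p)$ is separately normalized to lie in $\Delta_{n-1}$, so $f$ is a continuous self-map of the compact convex set $P_{d,n}$; and at a fixed point the relation $d^i_j(\vec p^{\,*})=D^i x^i_j$ together with the hypothesis applied to $S^i=\{j: x^i_j>0\}$ (note that $\vec p^{\,*}\in P_{d,n}(i,S^i)$ precisely because the $i$-th block has support $S^i$) forces $D^i=0$ for every $i$ independently, whence $\vec p^{\,*}$ lies in all $A^i_j$ by closedness. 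The only steps worth flagging explicitly in a final write-up are the two places where the hypotheses beyond the covering condition are actually used: nonemptiness of each $A^i_j$ (for continuity of $\dist(\cdot,A^i_j)$) and closedness (to pass from $d^i_k(\vec p^{\,*})=0$ to membership); you invoke both, if somewhat tersely. Your closing observation is also apt: the reason a single fixed point suffices for all $d$ factors is that the fixed-point equation decouples into $d$ independent blocks, which is exactly the structural content of Peleg's generalization.
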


This is a generalization of the KKM theorem to $d$-fold products of simplices, and it likewise has the following immediate corollary:

\begin{corollary}\label{pelegcor}
Let $A^i_j$, $i=1,\ldots,d$, $j=1,\ldots,n$, be closed subsets of $P_{d,n}$. Suppose that
\begin{itemize}
\item[(a)] for all $i \in [d]$, $\bigcup_{j=1}^n A^i_j = P_{d,n}$, and
\item[(b)] for all $\vx =(x^1_1,\ldots,x^1_n;\cdots;x^d_1,\ldots,x^d_n) \in P_{d,n}$, all $i \in [d]$ and all $j \in [n]$, $x^i_j=0 \Rightarrow \vx \notin A^i_j$.
\end{itemize}
Then $\bigcap_{i=1}^d \bigcap_{j=1}^n A^i_j \neq \emptyset$.
\end{corollary}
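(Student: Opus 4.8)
The plan is to derive Corollary \ref{pelegcor} directly from Peleg's theorem (Theorem \ref{peleg}) by verifying that its hypotheses (a) and (b) imply the covering condition demanded by that theorem, namely that $P_{d,n}(i,S) \subseteq \bigcup_{j \in S} A^i_j$ for every $i \in [d]$ and every $\emptyset \ne S \subseteq [n]$. Once this inclusion is in hand for all $i$ and $S$, the conclusion is immediate.

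To check the inclusion, I would fix $i \in [d]$ and $\emptyset \ne S \subseteq [n]$ and take an arbitrary point $\vx \in P_{d,n}(i,S)$. By the definition of $P_{d,n}(i,S)$, the $i$-th block of $\vx$ lies in the face $F(S)$, so that $x^i_j = 0$ for every $j \notin S$. Hypothesis (a) guarantees that $\vx$ lies in some $A^i_{j_0}$ with $j_0 \in [n]$. The contrapositive of hypothesis (b) then gives $x^i_{j_0} \ne 0$, which rules out the possibility $j_0 \notin S$; hence $j_0 \in S$ and $\vx \in \bigcup_{j \in S} A^i_j$. Since $\vx$ was arbitrary, the required inclusion $P_{d,n}(i,S) \subseteq \bigcup_{j \in S} A^i_j$ holds.

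With the covering condition established for all $i$ and $S$, Theorem \ref{peleg} applies verbatim and yields $\bigcap_{i=1}^d \bigcap_{j=1}^n A^i_j \ne \emptyset$, completing the argument. There is no genuine obstacle here: the proof is a short logical manipulation, and the only point deserving care is matching the vanishing-coordinate condition (b) against the face-covering hypothesis of Peleg's theorem — specifically, recognizing that (a) supplies a covering index while (b) forces that index to lie in $S$. This parallels exactly the way Corollary \ref{kkmcor} is obtained from the KKM theorem (Theorem \ref{kkm}), with the product structure of $P_{d,n}$ contributing nothing beyond the bookkeeping of the superscript $i$.
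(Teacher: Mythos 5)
Your proposal is correct and is exactly the argument the paper intends: the corollary is stated as an ``immediate corollary'' of Theorem \ref{peleg}, and your verification that (a) supplies a covering index $j_0$ while (b) forces $j_0 \in S$ (since $x^i_j = 0$ for $j \notin S$ on $P_{d,n}(i,S)$) is the natural filling-in of that one-line deduction. Nothing is missing.
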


Here, too, there is a dual variant which differs from Corollary \ref{pelegcor} by reversing the conclusion of $x^i_j=0$:

\begin{theorem}[Peleg \cite{peleg2}]\label{dualpeleg}
Let $B^i_j$, $i=1,\ldots,d$, $j=1,\ldots,n$, be closed subsets of $P_{d,n}$. Suppose that
\begin{itemize}
\item[(a)] for all $i \in [d]$, $\bigcup_{j=1}^n B^i_j = P_{d,n}$, and
\item[(b)] for all $\vx =(x^1_1,\ldots,x^1_n;\cdots;x^d_1,\ldots,x^d_n) \in P_{d,n}$, all $i \in [d]$ and all $j \in [n]$, $x^i_j=0 \Rightarrow \vx \in B^i_j$.
\end{itemize}
Then $\bigcap_{i=1}^d \bigcap_{j=1}^n B^i_j \neq \emptyset$.

\end{theorem}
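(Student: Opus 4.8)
The plan is to prove Theorem \ref{dualpeleg} by the same overall strategy used for the single-simplex dual (Theorem \ref{sperner}), while confronting the one point where that strategy breaks down for products. Recall that Theorem \ref{sperner} was obtained from Corollary \ref{kkmcor} by complementation, $A_j=\Delta_{n-1}\setminus B_j$: assuming $\bigcap_j B_j=\emptyset$ makes the $A_j$ cover, so Corollary \ref{kkmcor} applies and yields a point in $\bigcap_j A_j=\emptyset$, a contradiction. This works because for a single simplex the covering hypothesis of Corollary \ref{kkmcor} is exactly the negation of the desired conclusion. For $d\ge 2$ this coincidence fails: setting $A^i_j=P_{d,n}\setminus B^i_j$, the hypothesis ``$\bigcup_j A^i_j=P_{d,n}$ for every $i$'' of Corollary \ref{pelegcor} says that every $\bigcap_j B^i_j$ is empty, whereas the negation of the conclusion only says that the overall intersection $\bigcap_{i,j}B^i_j$ is empty. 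In fact the former never holds: applying Theorem \ref{sperner} fiberwise -- fix a factor $i$ and restrict the closed sets $B^i_1,\dots,B^i_n$ to a fiber $\{\vx : x^{i'}\text{ fixed for }i'\ne i\}\cong\Delta_{n-1}$, on which (a) and (b) reduce to the hypotheses of Theorem \ref{sperner} -- shows that $D^i:=\bigcap_{j=1}^n B^i_j$ is a nonempty closed set meeting every $i$-fiber. Thus naive complementation is vacuous, and the real content is the \emph{simultaneous} statement $\bigcap_{i=1}^d D^i\ne\emptyset$.

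To produce the common point I would argue combinatorially, in the spirit of the classical proof of Sperner's covering lemma. First I triangulate $P_{d,n}$ with a mesh tending to $0$. To each vertex $v$ of the triangulation and each factor $i$ I assign a label $\ell^i(v)\in[n]$ with $v\in B^i_{\ell^i(v)}$; this is possible by hypothesis (a). Hypothesis (b) is what controls the behavior on the boundary: if $v$ lies on the facet $\{x^i_j=0\}$ then $v\in B^i_j$, so I am free to take $\ell^i(v)=j$ there, and this freedom is exactly what is needed to force, in each factor separately, the boundary pattern required by a Sperner-type lemma. The goal is then to invoke a product version of Sperner's lemma to produce a single cell $\sigma$ of the triangulation whose vertices realize, for every factor $i$, all of the labels in $[n]$; as the mesh tends to $0$ such cells shrink to a point $\vx^*$, and since each $B^i_j$ is closed and $\sigma$ contains a vertex of $B^i_j$ for every $i$ and $j$, the limit satisfies $\vx^*\in\bigcap_{i,j}B^i_j$, as required.

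The main obstacle is precisely this product Sperner lemma, together with the verification that the labeling above meets its boundary hypotheses: this is the genuine topological core of Theorem \ref{dualpeleg}, the same core that underlies the primal Theorem \ref{peleg}, and it is why the dual statement is a separate theorem rather than a formal corollary of the primal one. I expect the cleanest route to the combinatorial lemma to be a degree (or parity) computation on $P_{d,n}$, equivalent to Brouwer's fixed-point theorem. It is worth noting that a soft convexity-based argument such as Kakutani applied to the fiber-selections $\vx\mapsto\{u:(\dots,u,\dots)\in D^i\}$ does \emph{not} suffice, because these fibers need not be convex and passing to their convex hulls destroys membership in $D^i$; this is the concrete manifestation of the difficulty. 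Once the combinatorial lemma is in place, the limiting argument of the previous paragraph finishes the proof.
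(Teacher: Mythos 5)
Your proposal is not a complete proof: it has a genuine gap at exactly the point you yourself flag. Everything before the gap is sound and insightful --- your observation that complementation is vacuous here (because Theorem \ref{sperner} applied fiberwise already shows each $\bigcap_{j}B^i_j$ meets every $i$-fiber) is a nice sharpening of the paper's own remark that Theorem \ref{dualpeleg} cannot be deduced directly from Corollary \ref{pelegcor}. But the entire topological content of the theorem is then deferred to an unstated and unproven ``product version of Sperner's lemma'' with \emph{dual} boundary conditions, and that lemma is essentially the theorem itself in combinatorial disguise. It is not an off-the-shelf result one can simply invoke: the known product/polytopal Sperner lemmas (the ones underlying the primal Theorem \ref{peleg}) impose the primal condition that labels on a face come from the support of that face, whereas your labeling does the opposite --- on $\{x^i_j=0\}$ you assign $\ell^i(v)=j$, i.e.\ labels come from the \emph{vanished} coordinates. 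Reversing the boundary condition changes the lemma, and whether (and why) a fully-labeled cell still must exist is precisely the work to be done; your proposal offers only the expectation that ``a degree (or parity) computation'' will do it. You also leave unresolved what the boundary rule should be on faces where several coordinates of the same factor vanish ($x^i_j=x^i_{j'}=0$, $j\ne j'$), where your rule is ambiguous; any precise statement of the combinatorial lemma must handle such faces, and this is where naive parity arguments typically need care.

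For comparison: the paper does not prove this theorem at all --- it quotes it from a separate paper of Peleg, exactly because the dual statement does not follow formally from the primal one. So the honest status of your write-up is a reduction of Theorem \ref{dualpeleg} to a combinatorial lemma of comparable depth, together with a correct (and standard) compactness/limiting argument for passing from fully-labeled cells of mesh $\to 0$ to a point of $\bigcap_{i,j}B^i_j$. The limiting step and the diagnosis of the difficulty are correct; the core is missing.
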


We remark that, unlike the case $d=1$, there seems to be no direct way to deduce Theorem \ref{dualpeleg} from Corollary \ref{pelegcor}. Another comment is that Peleg's theorems hold also for products of simplices of different dimensions, but were stated above in the special case (needed here) where the simplices have the same dimension.

\medskip

The other known results that we need concern hypergraphs $H=(V,E)$ on finite vertex sets, and involve fractional versions of our hypergraph invariants. We recall that a {\em fractional matching} is a function $f: E \to \mathbb{R}_+$ such that $\sum_{e \ni v} f(e) \le 1$ for all $v \in V$. The {\em fractional matching number} $\nu^*(H)$ is the maximum of $\sum_{e \in E} f(e)$ over all fractional matchings in $H$. Similarly, a {\em fractional transversal} is a function $g: V \to \mathbb{R}_+$ such that $\sum_{v \in e} g(v) \ge 1$ for all $e \in E$. The {\em fractional transversal number} $\tau^*(H)$ is the minimum of $\sum_{v \in V} g(v)$ over all fractional transversals in $H$. By linear programming duality we always have $\nu^*(H)=\tau^*(H)$.

Analogously, a {\em fractional strongly independent set} is a function $f: V \to \mathbb{R}_+$ such that $\sum_{v \in e} f(v) \le 1$ for all $e \in E$. The {\em fractional strong independence number} $\iota^*(H)$ is the maximum of $\sum_{v \in V} f(v)$ over all fractional strongly independent sets in $H$. A {\em fractional edge-cover} is a function $g: E \to \mathbb{R}_+$ such that $\sum_{e \ni v} g(e) \ge 1$ for all $v \in V$. The {\em fractional edge-covering number} $\rho^*(H)$ is the minimum of $\sum_{e \in E} g(e)$ over all fractional edge-covers in $H$. We always have $\iota^*(H)=\rho^*(H)$; both are defined as $\infty$ if $\cup E \ne V$.

We further recall that a hypergraph $H=(V,E)$ is {\em $d$-partite} if there exists a partition $V^1,\ldots,V^d$ of $V$ such that $|e \cap V^i|=1$ for every $e \in E$ and $i \in [d]$. We will use the following bound on the ratio $\nu^*/\nu$:

\begin{theorem}[Furedi \cite{furedi}]\label{furedi}
Let $H$ be a $d$-partite hypergraph, $d \ge 2$. Then $\nu(H) \ge \frac{\nu^*(H)}{d-1}$.
\end{theorem}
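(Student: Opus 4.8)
The plan is to prove the bound $\nu(H) \ge \frac{\nu^*(H)}{d-1}$ for a $d$-partite hypergraph $H=(V,E)$ with parts $V^1,\ldots,V^d$. Since this is a statement purely about finite $d$-partite hypergraphs (the interval structure plays no role here), I would work directly with an optimal fractional matching and extract an integral matching of the claimed size. The natural strategy is to show that the fractional matching polytope of a $d$-partite hypergraph is ``almost integral'' in the sense that its vertices have a bounded number of nonzero coordinates relative to their support structure, and then to clean up a fractional solution into an integral one losing only a factor of $d-1$.

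First I would fix an optimal fractional matching $f\colon E \to \mathbb{R}_+$ with $\sum_{e\in E} f(e) = \nu^*(H)$, chosen to be a vertex of the fractional matching polytope (so $f$ is rational and its support $F=\{e: f(e)>0\}$ is inclusion-minimal among optima). The key structural fact to establish is that the edges in the support $F$ decompose in a controlled way. Concretely, I would consider the ``fractional load'' on each vertex and use the $d$-partiteness to argue that one can partition or greedily select from $F$ so that the selected edges are pairwise disjoint. The cleanest route is probably an LP/polyhedral argument: at a vertex $f$ of the polytope $\{f\ge 0 : \sum_{e\ni v} f(e)\le 1 \text{ for all } v\}$, the number of tight constraints must equal the number of positive variables, and combining this with $d$-partiteness (each edge meets each part exactly once) forces the support $F$ to have a sparse structure — one can bound the total fractional weight $\sum_{e\in F} f(e)$ below by something like $\frac{|M|}{\text{(density)}}$ where $M$ is a maximum matching inside $F$.

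The main obstacle, and the heart of the proof, will be extracting the factor $(d-1)$ rather than a weaker factor like $d$. I expect the right idea is the following counting/defect argument: take a maximum matching $M \subseteq F$ and let $W = \bigcup_{e\in M} e$ be the vertices it covers, so $|W| = d|M|$. By maximality of $M$, every edge in the support $F$ meets $W$. Now for each vertex $v$, the constraint $\sum_{e\ni v} f(e) \le 1$ holds, so summing over $v \in W$ gives $\sum_{v\in W}\sum_{e\ni v} f(e) \le |W| = d|M|$. On the other hand, since each edge of $F$ meets $W$ in at least one vertex and (crucially) meets it in at most $d$ vertices but we can do better: an edge disjoint from some part's representative in $M$ contributes fewer incidences. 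The sharp bound $\nu^* \le (d-1)\nu$ should come from showing that each $e \in F$ can be charged to $W$ with multiplicity at most $d-1$ after discarding one incidence per edge using $d$-partiteness — each edge meeting $M$ shares its intersection across at most $d$ parts, but one of those shared vertices can be ``credited'' to $M$ itself, leaving a multiplier of $d-1$. I would formalize this as $\nu^*(H)=\sum_{e\in F} f(e) \le \frac{1}{1}\sum_{v\in W'}\sum_{e\ni v}f(e)$ for a suitable subset $W'\subseteq W$ of size $(d-1)|M|$, yielding $\nu^* \le (d-1)|M| \le (d-1)\nu(H)$; verifying that such a $W'$ exists (i.e., that one can drop one full transversal part's worth of weight without losing coverage of $F$) is exactly where the $d$-partiteness must be used carefully, and is the step I would expect to demand the most attention.
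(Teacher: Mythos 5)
A preliminary remark: the paper does not prove this theorem at all --- it is invoked as a black box from F\"uredi's 1981 paper --- so your proposal has to stand entirely on its own. The first part of your plan is sound but only delivers the trivial factor $d$: taking an optimal fractional matching $f$ with support $F$, a maximum matching $M \subseteq F$ with vertex set $W$ (so $|W|=d|M|$), and summing the constraints $\sum_{e \ni v} f(e) \le 1$ over $v \in W$, one gets $\nu^*(H)=\sum_{e \in F} f(e) \le \sum_{v \in W}\sum_{e \ni v} f(e) \le d|M| \le d\,\nu(H)$, since every $e \in F$ meets $W$ by maximality of $M$. (Your opening discussion of extreme points of the fractional matching polytope is never actually used anywhere in the argument.) The entire improvement from $d$ to $d-1$ is carried by your final claim: that some $W' \subseteq W$ with $|W'|=(d-1)|M|$ still meets every edge of $F$.

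That claim is the genuine gap, and it is not of the ``needs more care'' kind: it asserts $\tau(F) \le (d-1)\nu(F)$ for the $d$-partite hypergraph $F$ (with the cover even forced to lie inside $W$), which is precisely Ryser's conjecture for $F$. Ryser's conjecture is K\"onig's theorem when $d=2$, a difficult theorem of Aharoni (proved by topological methods) when $d=3$, and open for every $d \ge 4$. Nothing in your outline exploits the one special feature available --- that $F$ is the support of an extreme-point optimal fractional matching --- and no argument deducing such an integral cover from that structure is known; note that even in the case $d=2$ your claim \emph{is} K\"onig's theorem, so you would be assuming it rather than proving it. The plan thus tries to derive the fractional bound from a strictly stronger integral covering statement, which inverts the actual logical situation: F\"uredi's theorem is of interest precisely because it is the tractable fractional relaxation of Ryser's conjecture, and its known proofs stay on the fractional side (analyzing the structure of optimal fractional matchings and critical intersecting subhypergraphs) rather than producing integral covers. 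A repair cannot keep the maximum-matching scaffold either: a cover (integral or fractional) supported on $W$ cannot in general have total weight below $d|M|$, because $F$ may contain edges meeting $W$ in a single vertex, each of which then needs weight $1$ on that vertex. So a genuinely different idea is required, not a tightening of this one.
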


We recall that the degree of a vertex $v$ is the number of edges $e$ that contain $v$. We denote by $\Delta(H)$ the {\em maximal degree} in $H$. Similarly, the size of an edge $e$ is the number of vertices $v$ contained in $e$. We denote by $r(H)$ the {\em rank} of $H$, i.e., the maximal size of an edge. A well-known bound on the ratio $\tau/\tau^*$ is:

\begin{theorem}[Lov\'asz \cite{lovasz}]\label{lovasz}
Let $H$ be  a hypergraph with $\Delta(H)=d \ge 1$. Then $\tau(H) \le (1+\ln
d)\tau^*(H)$.
\end{theorem}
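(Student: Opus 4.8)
The plan is to recast the transversal problem as a set-cover (hitting-set) problem and to analyze the natural greedy algorithm by a dual-fitting argument; the maximum degree $d=\Delta(H)$ will enter in exactly one place, as the maximum size of a ``set'' in this reformulation. First I would set up the reformulation. For each vertex $v$ put $E_v=\{e\in E : v\in e\}$, so that $|E_v|=\deg(v)\le d$. A transversal is precisely a collection of vertices whose sets $E_v$ cover the universe $E$, so $\tau(H)$ is the minimum number of such sets needed to cover $E$, and the fractional-transversal LP coincides with the set-cover LP and has value $\tau^*(H)$. By LP duality (the same duality giving $\nu^*=\tau^*$ recalled above), $\tau^*(H)$ equals the optimum of the dual packing LP: maximize $\sum_{e\in E} y_e$ subject to $\sum_{e\in E_v} y_e\le 1$ for every $v\in V$ and $y\ge 0$. (We may assume every edge is non-empty, so that a transversal exists and $\tau^*<\infty$.)

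Next I would run the greedy algorithm: repeatedly select a vertex covering the largest number of still-uncovered edges, deleting those edges, until all of $E$ is covered; call the resulting transversal $T$. Whenever a selected vertex newly covers $k$ edges, assign to each of those edges a \emph{price} $p(e)=1/k$. Since each selection contributes exactly its count $k$ of freshly covered edges, each charged $1/k$, we get $|T|=\sum_{e\in E}p(e)$.

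The key step is the dual-fitting estimate: for every vertex $v$,
\[ \sum_{e\in E_v} p(e)\ \le\ H(d):=\sum_{i=1}^{d}\frac1i. \]
To prove it, fix $v$, set $s=|E_v|\le d$, and order the edges of $E_v$ as $e_1,\dots,e_s$ in the order the greedy covers them. At the moment $e_i$ is covered, the edges $e_i,\dots,e_s$ of $E_v$ are all still uncovered, so $v$ itself would cover at least $s-i+1$ new edges; by the maximality of the greedy choice, the vertex actually selected covers at least $s-i+1$ new edges, whence $p(e_i)\le \frac{1}{s-i+1}$. Summing yields $\sum_{e\in E_v}p(e)\le\sum_{i=1}^{s}\frac{1}{s-i+1}=H(s)\le H(d)$. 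This is the heart of the argument, and the \emph{only} place the degree bound is used; I expect it to be the main obstacle, since one must correctly exploit greedy maximality against the fixed set $E_v$ of size $\le d$ to obtain the harmonic-tail bound (rather than a bound in terms of $|E|$).

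Finally I would conclude. The estimate says that $y_e:=p(e)/H(d)$ satisfies $\sum_{e\in E_v}y_e\le 1$ for every $v$, i.e.\ $y$ is feasible for the dual packing LP, so its value is at most the optimum $\tau^*(H)$: thus $\sum_{e\in E}p(e)/H(d)\le \tau^*(H)$. Therefore
\[ \tau(H)\ \le\ |T|\ =\ \sum_{e\in E}p(e)\ \le\ H(d)\,\tau^*(H)\ \le\ (1+\ln d)\,\tau^*(H), \]
using $H(d)=\sum_{i=1}^d \frac1i\le 1+\int_1^d \frac{dt}{t}=1+\ln d$. The cases $d=1$ (where $H(1)=1$) and finiteness of $E$ (guaranteeing termination of the greedy) are immediate.
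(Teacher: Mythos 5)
Your proposal is correct, but note that there is nothing in the paper to compare it against: Theorem \ref{lovasz} is quoted from Lov\'asz \cite{lovasz} as a known black box (it is used only through Corollary \ref{lovaszcor}, after exchanging the roles of vertices and edges), and the paper gives no proof of it. Your argument is the classical greedy analysis --- essentially Lov\'asz's original one, presented in the modern dual-fitting language --- and it is sound at every step. The reformulation of transversals as covers of the universe $E$ by the sets $E_v$ with $|E_v|=\deg(v)\le d$ is exactly right, and LP duality correctly identifies $\tau^*(H)$ with the optimum of the packing LP in the variables $y_e$. The heart of the matter, the estimate $\sum_{e\in E_v}p(e)\le H(s)\le H(d)$ where $s=|E_v|$, is argued correctly: at the greedy step in which the $i$-th edge of $E_v$ (in the order of coverage, ties broken arbitrarily) becomes covered, the edges $e_i,\dots,e_s$ are all still uncovered, so $v$ itself is an available choice covering at least $s-i+1$ new edges (in particular $v$ cannot have been selected earlier), and greedy maximality gives $p(e_i)\le \frac{1}{s-i+1}$. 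Feasibility of $y_e=p(e)/H(d)$ for the packing LP then yields $|T|=\sum_e p(e)\le H(d)\,\tau^*(H)$, and $H(d)\le 1+\ln d$ finishes the proof. The one hypothesis you should state explicitly --- finiteness of $E$, which guarantees termination of the greedy and validity of LP duality --- does hold in the setting where the paper applies the theorem, namely the finite auxiliary hypergraphs $\Gamma(\e)$ of Section \ref{main}.
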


We will use this bound with the roles of vertices and edges reversed, namely:

\begin{corollary}\label{lovaszcor}
Let $H$ be a hypergraph with $r(H)=d \ge 1$. Then $\rho(H) \le (1+\ln d)\rho^*(H)$.
\end{corollary}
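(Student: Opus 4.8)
The plan is to derive this directly from Lov\'asz's theorem (Theorem \ref{lovasz}) by passing to the dual hypergraph. Given $H=(V,E)$, I would form the dual hypergraph $H^*$ whose vertex set is $E$ and whose edges are the sets $e_v = \{e \in E : v \in e\}$, one for each $v \in V$. Under this standard duality vertices and edges swap roles, and the content of the corollary should reduce to checking that the three relevant parameters transform as expected, after which Theorem \ref{lovasz} applies verbatim.

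First I would verify the three correspondences. A subset $T \subseteq E$ is a transversal of $H^*$ precisely when it meets $e_v$ for every $v \in V$, i.e. when every $v$ lies in some member of $T$; this says exactly that $T$ is an edge-cover of $H$, so $\tau(H^*) = \rho(H)$. Likewise a function $g : E \to \mathbb{R}_+$ is a fractional transversal of $H^*$ iff $\sum_{e \ni v} g(e) \ge 1$ for all $v$, which is the defining condition of a fractional edge-cover of $H$; hence $\tau^*(H^*) = \rho^*(H)$. Finally, the degree of a vertex $e \in E$ in $H^*$ is $|\{v : e \in e_v\}| = |e|$, so the maximal degree satisfies $\Delta(H^*) = r(H) = d$. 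With these identities, applying Theorem \ref{lovasz} to $H^*$ yields $\tau(H^*) \le (1+\ln d)\tau^*(H^*)$, which reads $\rho(H) \le (1+\ln d)\rho^*(H)$, as required.

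The only point needing care is the degenerate case $\cup E \ne V$: then some $v$ lies in no edge, so the corresponding edge $e_v$ of $H^*$ is empty and admits no transversal, giving $\tau(H^*) = \infty = \rho(H)$, while also $\rho^*(H) = \infty$, so the inequality holds vacuously and is consistent with the convention recorded earlier. I do not expect a genuine obstacle here, since all the mathematical weight sits in Lov\'asz's theorem; the corollary is a routine dualization. The one step worth stating explicitly is the second correspondence, as the matching of the fractional parameters rests on comparing the defining inequalities $\sum_{v \in e} g(v) \ge 1$ (for $H^*$, indexed by the edges $e_v$) with $\sum_{e \ni v} g(e) \ge 1$ (for fractional edge-covers of $H$), which coincide term by term.
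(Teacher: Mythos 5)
Your proof is correct and is exactly the argument the paper intends: the corollary is stated there without proof as the dualization of Lov\'asz's theorem (``with the roles of vertices and edges reversed''), and your verification that $\tau(H^*)=\rho(H)$, $\tau^*(H^*)=\rho^*(H)$ and $\Delta(H^*)=r(H)$ is the routine check being left implicit. Your handling of the degenerate case $\cup E \ne V$ is also consistent with the paper's conventions.
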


In the case $d=2$, when $H$ is bipartite, the situation is even better:

\begin{theorem}[Gallai \cite{gal59}]\label{bipgallai}
Let $G$ be a bipartite graph. Then $\rho(G)=\rho^*(G)$.
\end{theorem}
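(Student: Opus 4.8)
The plan is to prove the nontrivial inequality $\rho(G) \le \rho^*(G)$, since the reverse inequality is immediate: the indicator function of any edge-cover is a fractional edge-cover. We may assume $G$ has no isolated vertices, as otherwise $\rho(G)=\rho^*(G)=\infty$. I would first recast $\rho^*(G)$ as a linear program. Writing $M$ for the $|V| \times |E|$ vertex--edge incidence matrix of $G$, the fractional edge-covers are exactly the points of the polyhedron $Q = \{ g \in \mathbb{R}^E : g \ge 0,\ Mg \ge \mathbf{1}\}$, and $\rho^*(G) = \min\{\mathbf{1}^\top g : g \in Q\}$. Since $G$ has no isolated vertices, $Q$ is nonempty (the all-ones vector lies in it); since $Q \subseteq \mathbb{R}^E_{\ge 0}$ it is pointed, and the objective is bounded below by $0$, so the minimum is attained at a vertex $g^*$ of $Q$.

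The key step is integrality. Because $G$ is bipartite, its incidence matrix $M$ is totally unimodular: each column, corresponding to an edge $e=\{u,v\}$, has exactly two nonzero entries, both equal to $1$ and lying on the two different sides of the bipartition, and such matrices are classically known to be totally unimodular. By the Hoffman--Kruskal integrality theorem, a polyhedron of the form $\{g \ge 0 : Mg \ge b\}$ with $M$ totally unimodular and $b$ integral has only integral vertices; applied with $b=\mathbf{1}$ this forces $g^*$ to be integer-valued, so $\sum_e g^*_e = \rho^*(G)$ with every $g^*_e \in \mathbb{Z}_{\ge 0}$.

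Finally I would round $g^*$ down to a $0/1$ vector by setting $g'_e = \min(g^*_e,1)$. For each vertex $v$, the constraint $\sum_{e \ni v} g^*_e \ge 1$ together with integrality yields some edge $e \ni v$ with $g^*_e \ge 1$, hence $g'_e=1$ and $\sum_{e \ni v} g'_e \ge 1$; thus $g'$ remains feasible, while $\sum_e g'_e \le \sum_e g^*_e = \rho^*(G)$. The support of $g'$ is therefore a genuine edge-cover of size at most $\rho^*(G)$, giving $\rho(G) \le \rho^*(G)$ and, with the trivial reverse inequality, the equality $\rho(G)=\rho^*(G)$.

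The main obstacle is the integrality of the optimal vertex $g^*$, which is precisely the total unimodularity of the bipartite incidence matrix (the rounding and the attainment of the minimum at a vertex are routine once this is in hand). If a proof internal to graph combinatorics is preferred, an alternative route combines Gallai's identity $\nu(G)+\rho(G)=|V|$ (valid for any graph without isolated vertices) with its fractional analogue $\nu^*(G)+\rho^*(G)=|V|$ and with König's theorem $\nu(G)=\nu^*(G)$ for bipartite graphs, yielding $\rho(G)=|V|-\nu(G)=|V|-\nu^*(G)=\rho^*(G)$. The fractional identity itself follows from the fractional complementation $\alpha^*(G)+\tau^*(G)=|V|$ (replace a fractional vertex-cover $g^{VC}\le \mathbf 1$ by $\mathbf 1-g^{VC}$ to get a fractional independent set, and vice versa) together with the LP dualities $\tau^*=\nu^*$ and $\alpha^*=\iota^*=\rho^*$, the last using that in a graph strong independence coincides with ordinary independence; in this route the obstacle shifts to these identities rather than to total unimodularity.
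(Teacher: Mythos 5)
Your proof is correct. There is nothing in the paper to compare it against: the paper states this result without proof, as a cited theorem of Gallai \cite{gal59}, and uses it as a black box (to obtain the improved bound $\rho(H)\le n$ for $d=2$ in Theorem \ref{maintheorem}). Your main argument --- reduce to graphs with no isolated vertices, note that the bipartite incidence matrix $M$ is totally unimodular, invoke Hoffman--Kruskal to get an integral optimal vertex $g^*$ of the polyhedron $\{g\ge 0 : Mg\ge \mathbf{1}\}$, and truncate $g^*$ to a $0/1$ feasible vector of no larger weight --- is a complete and standard polyhedral derivation; the supporting steps (nonemptiness and pointedness of the polyhedron, attainment of the minimum at a vertex, the rounding argument that extracts an honest edge-cover) are all sound. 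Your alternative route, combining the complementation identities $\nu(G)+\rho(G)=|V|$ and $\nu^*(G)+\rho^*(G)=|V|$ for graphs without isolated vertices with K\"onig's theorem in the form $\nu(G)=\nu^*(G)$, is essentially Gallai's original style of argument; it stays within elementary matching theory but shifts the burden to those identities, which you correctly reduce, on the fractional side, to LP duality ($\tau^*=\nu^*$, $\iota^*=\rho^*$) and the truncation-based complementation $\alpha^*+\tau^*=|V|$, together with the observation that strong independence and ordinary independence coincide for graphs. Either route is a legitimate self-contained proof of the theorem as it is used in the paper.
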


\section{Proof of Theorems \ref{disjointcells} and \ref{maintheorem}}\label{main}

Extending what we did in Section \ref{ps} in a natural way, we now identify every $d \times n$-partition of $V= \bigcup_{i=1}^d U^i$ with a point $\vx =(x^1_1,\ldots,x^1_n;\cdots;x^d_1,\ldots,x^d_n)$ in $P_{d,n}=(\Delta_{n-1})^d$. Under this identification, $x^i_j$ is the length of the $j$-th cell in the $n$-partition of $U^i$.

We use the set $[n]^d$ to index the $d$-cells of a $d \times n$-partition of $V$. That is, a $d$-tuple $\vj =(j_1,\ldots,j_d) \in [n]^d$ will index the $d$-cell formed by the $j_1$-th cell in the $n$-partition of $U^1$, ..., the $j_d$-th cell in the $n$-partition of $U^d$. We refer to this as the $\vj$-th $d$-cell.

\medskip

{\em Proof of Theorem \ref{disjointcells}:} Given the $d$-interval hypergraph $H=(V,E)$, $d \ge 2$, and the positive integer $n$, we define $Y_{\vj} \subseteq P_{d,n}$ for $\vj \in [n]^d$ by:
\[ \vx \in Y_{\vj} \,\,\Leftrightarrow \textrm{ the }\vj\textrm{-th }d\textrm{-cell of the }d \times n\textrm{-partition }\vx\textrm{ contains an edge of }H \]
The premise of the theorem says that the sets $Y_{\vj}$, $\vj \in [n]^d$, cover $P_{d,n}$. It is easy to verify that the sets $Y_{\vj}$ are open. Moreover, if any of the cells in a $d$-cell is empty then this $d$-cell cannot contain an edge of $H$. Therefore, for $\vx \in P_{d,n}$, $i \in [d]$, and $\vj \in [n]^d$ we have the implication $x^i_{j_i}=0 \Rightarrow \vx \notin Y_{\vj}$.

We now associate with every $\vx \in P_{d,n}$ a $d$-dimensional $n \times \cdots \times n$ array of nonnegative real numbers $S(\vx)=(s_{\vj}(\vx))_{\vj \in [n]^d}$, defined by:
\[ s_{\vj}(\vx)=\dist (\vx, Y_{\vj}^c) \]
where $Y_{\vj}^c=P_{d,n} \setminus Y_{\vj}$ is a closed set, and ``$\dist$'' denotes Euclidean distance. Note that for no $\vx \in P_{d,n}$ do all the entries of $S(\vx)$ vanish, because that would mean that $\vx \in Y_{\vj}^c$ for all $\vj \in [n]^d$, contradicting the fact that the sets $Y_{\vj}$ cover $P_{d,n}$. Note also that $S(\vx)$ is continuous in $\vx$.

Next, given the array $S(\vx)$ we denote by $\sigma^i_j(S(\vx))$, for $i=1,\ldots,d$, $j=1,\ldots,n$, the sum of the entries in the $j$-th layer in direction $i$, that is:
\[ \sigma^i_j(S(\vx))=\sum_{\vj \in [n]^d: j_i=j} s_{\vj}(\vx) \]
Then we define $A^i_j \subseteq P_{d,n}$ for $i=1,\ldots,d$, $j=1,\ldots,n$, by:
\[ \vx \in A^i_j \Leftrightarrow \sigma^i_j(S(\vx))=\max_{j' \in [n]} \sigma^i_{j'}(S(\vx)) \]
That is, $\vx \in A^i_j$ if the $j$-th layer in direction $i$ of $S(\vx)$ has the largest sum among the layers parallel to it.

We are going to apply Corollary \ref{pelegcor} to the sets $A^i_j$. By the continuity of $S(\vx)$, these sets are closed. For any direction $i$, one of the layers has the largest sum, hence $A^i_j$, $j=1,\ldots,n$, cover $P_{d,n}$. Finally, let us prove the implication $x^i_j=0 \Rightarrow \vx \notin A^i_j$. Assume, for contradiction, that $x^i_j=0$ and $\vx \in A^i_j$. As mentioned above, we deduce from $x^i_j=0$ that for any $\vj \in [n]^d$ with $j_i=j$ we have $\vx \notin Y_{\vj}$. Thus $\sigma^i_j(S(\vx))=0$, and we deduce from $\vx \in A^i_j$ that $\sigma^i_{j'}(S(\vx))=0$ for all $j' \in [n]$, meaning that all the entries of $S(\vx)$ vanish. As shown above, this is impossible.

By Corollary \ref{pelegcor}, we find a point $\vx \in \bigcap_{i=1}^d \bigcap_{j=1}^n A^i_j$. This means that in every direction, all layer-sums of $S(\vx)$ are equal. Hence, $\sigma^i_j(S(\vx))=a$ for some fixed $a$, regardless of $i$ and $j$. Clearly $a>0$, as $S(\vx)$ cannot entirely vanish.

We now construct an auxiliary $d$-partite hypergraph $\Gamma$. Its vertex set $V(\Gamma)$ is partitioned into $V^i=\{v^i_1,\ldots,v^i_n\}$, $i=1,\ldots,d$. We think of vertex $v^i_j$ as representing the $j$-th cell in the $i$-th $n$-partition corresponding to the point $\vx$ found in the previous paragraph. The edge set $E(\Gamma)$ consists of those sets $e_{\vj}=\{v^1_{j_1},\ldots,v^d_{j_d}\}$ such that $s_{\vj}(\vx)>0$. That is, an edge indexed by $\vj$ is present in $\Gamma$ if $\vx \in Y_{\vj}$, i.e., the $\vj$-th $d$-cell of $\vx$ contains an edge of $H$.

The function $f: E(\Gamma) \to \mathbb{R}_+$ defined by $f(e_{\vj})=\frac{s_{\vj}(\vx)}{a}$ is a fractional matching in $\Gamma$ satisfying $\sum_{e \in E(\Gamma)} f(e) = n$. It follows from Theorem \ref{furedi} that there is a matching $M$ in $\Gamma$ of size at least $\frac{n}{d-1}$. For every $e_{\vj} \in M$, the corresponding $\vj$-th $d$-cell of $\vx$ contains an edge of $H$, yielding the conclusion of Theorem \ref{disjointcells}. \hspace{\stretch{1}}$\square$

\medskip

{\em Proof of Theorem \ref{maintheorem}:} Given the $d$-interval hypergraph $H=(V,E)$ and the positive integer $n$, we define $Z_{\vj} \subseteq P_{d,n}$ for $\vj \in [n]^d$ by:
\[ \vx \in Z_{\vj} \,\,\Leftrightarrow \textrm{ the }\vj\textrm{-th }d\textrm{-cell of the }d \times n\textrm{-partition }\vx\textrm{ is contained in an edge of }H \]
The premise of the theorem says that the sets $Z_{\vj}$, $\vj \in [n]^d$, cover $P_{d,n}$. It is easy to check that the sets $Z_{\vj}$ are closed.

For a fixed $\e > 0$, we associate with every $\vx \in P_{d,n}$ a $d$-dimensional $n \times \cdots \times n$ array of nonnegative real numbers $T(\vx,\e)=(t_{\vj}(\vx,\e))_{\vj \in [n]^d}$, defined by:
\[ t_{\vj}(\vx,\e)=\max(1-\frac{\dist (\vx, Z_{\vj})}{\e}, 0) \]
For no $\vx \in P_{d,n}$ do all the entries of $T(\vx,\e)$ vanish, because that would mean that $\vx \notin Z_{\vj}$ for all $\vj \in [n]^d$, contradicting the fact that the sets $Z_{\vj}$ cover $P_{d,n}$. Note also that $T(\vx,\e)$ is continuous in $\vx$.

As in the previous proof, we consider for $i=1,\ldots,d$, $j=1,\ldots,n$, the sum of the entries in the $j$-th layer in direction $i$ of the array $T(\vx,\e)$, that is:
\[ \sigma^i_j(T(\vx,\e))=\sum_{\vj \in [n]^d: j_i=j} t_{\vj}(\vx,\e) \]
Then we define $B^i_j(\e) \subseteq P_{d,n}$ for $i=1,\ldots,d$, $j=1,\ldots,n$, by:
\[ \vx \in B^i_j(\e) \Leftrightarrow \sigma^i_j(T(\vx,\e))=\max_{j' \in [n]} \sigma^i_{j'}(T(\vx,\e)) \]

We are going to apply Theorem \ref{dualpeleg} to the sets $B^i_j(\e)$. The closedness and covering properties are verified just as in the previous proof. To check condition (b) of the theorem, we need the following:

\begin{claim}\label{claim}
Let $\vx \in P_{d,n}$ and $i \in  [d]$. Consider $\vj, \vk \in [n]^d$ such that $j_{i'}=k_{i'}$ for all $i' \in [d] \setminus \{i\}$, and $x^i_{j_i}=0$. Then $t_{\vj}(\vx,\e) \ge t_{\vk}(\vx,\e)$.
\end{claim}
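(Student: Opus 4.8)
The plan is to first reduce the inequality between the two $t$-values to an inequality between distances, and then to establish the latter by a transport argument in the $i$-th factor that crucially uses the covering hypothesis standing in the proof.

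Since $t_{\vj}(\vx,\e)=\max(1-\dist(\vx,Z_{\vj})/\e,0)$ is a non-increasing function of $\dist(\vx,Z_{\vj})$, the desired inequality $t_{\vj}(\vx,\e)\ge t_{\vk}(\vx,\e)$ follows once I show
\[ \dist(\vx,Z_{\vj})\le \dist(\vx,Z_{\vk}) \]
(with the convention $\dist(\vx,\emptyset)=+\infty$, so the case $Z_{\vk}=\emptyset$ is vacuous). To prove this distance inequality it is enough to produce, for every $\vy\in Z_{\vk}$, a point $\vy'\in Z_{\vj}$ with $\|\vx-\vy'\|\le\|\vx-\vy\|$; taking the infimum over $\vy$ then gives the claim. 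So fix $\vy$, witnessed by an edge $e=\bigcup_{i'}I^{i'}$ with cell $k_{i'}$ of $y^{i'}$ contained in $I^{i'}$ for each $i'$.

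Because $\vj$ and $\vk$ agree outside coordinate $i$, I would keep $y'^{i'}=y^{i'}$ for $i'\ne i$ and alter only the $i$-th partition, replacing $y^i$ by some $z\in\Delta_{n-1}$. Then
\[ \|\vx-\vy'\|^2=\|\vx-\vy\|^2-\|x^i-y^i\|^2+\|x^i-z\|^2, \]
so it suffices to find $z$ with $\|x^i-z\|\le\|x^i-y^i\|$ whose cell $j_i$ is contained in an edge compatible with the retained cells $j_{i'}=k_{i'}$ of the $y^{i'}$. This is exactly where the hypothesis $x^i_{j_i}=0$ enters: cell $j_i$ of $x^i$ degenerates to the single point $\gamma:=\sum_{l<j_i}x^i_l$, so if $\gamma\in I^i$ I can simply take $z=x^i$ (cell $j_i$ of $x^i$ is the point $\gamma\in I^i$), obtaining $\vy'\in Z_{\vj}$ at zero cost in the $i$-th factor.

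The main obstacle is the remaining case $\gamma\notin I^i$, in which edge $e$ cannot be reused for cell $j_i$: one must instead locate another edge $e'$ whose $i$-th interval contains a neighbourhood of $\gamma$ while its other intervals still contain the cells $j_{i'}$ of $y^{i'}$, and then slide cell $j_i$ into $I^i(e')$ at cost at most $\|x^i-y^i\|$. Such an $e'$ need not exist for an arbitrary hypergraph — already a single short edge can make $\dist(\vx,Z_{\vj})>\dist(\vx,Z_{\vk})$ — so this is precisely the step forcing use of the premise that the sets $Z_{\vj'}$ cover $P_{d,n}$. I would exploit it through the one-dimensional Theorem \ref{galpart}(b): fixing the other coordinates and letting the $i$-th partition range over $\Delta_{n-1}$, the covering hypothesis guarantees that the intervals of $U^i$ coming from edges compatible with the surrounding cells cover $U^i$ in the sense that every $n$-partition of $U^i$ has a cell inside one of them, which supplies an edge $e'$ near $\gamma$. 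Carrying this out while simultaneously matching the prescribed indices $j_{i'}$ in the other directions and bounding the $i$-th–factor displacement by $\|x^i-y^i\|$ (using $x^i_{j_i}=0$) is the crux; the boundary indices $j_i\in\{1,n\}$, where cell $j_i$ is forced to abut an endpoint of $[0,1]$, need separate but analogous handling.
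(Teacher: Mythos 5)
Your reduction to the distance inequality $\dist(\vx,Z_{\vj})\le\dist(\vx,Z_{\vk})$, and your choice of modifying only the $i$-th factor of a closest point $\vy\in Z_{\vk}$ by setting $z=x^i$, is exactly the paper's argument. But you then derail on a misreading of the definition of a cell. In this paper an $n$-partition is $[0,c_1),(c_1,c_2),\ldots,(c_{n-1},1]$: every cell is an open interval, half-open only at the endpoints $0$ and $1$, and the paper notes explicitly that a cell of length zero is \emph{empty} (e.g.\ $(c,c)=\emptyset$, $[0,0)=\emptyset$, $(1,1]=\emptyset$), not the singleton $\{\gamma\}$. An empty cell is contained in every interval, so the $\vj$-th $d$-cell of your $\vy'$ (empty in $U^i$, identical to the $\vk$-th $d$-cell of $\vy$ elsewhere) is contained in the \emph{same} edge $e$ that witnesses $\vy\in Z_{\vk}$. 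Your ``easy case'' therefore always applies, there is no case $\gamma\notin I^i$, and no separate treatment of $j_i\in\{1,n\}$ is needed. That two-line observation is the whole of the paper's proof.

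The part of your write-up that you yourself identify as the crux --- locating a new edge $e'$ near $\gamma$ via the covering hypothesis and Theorem~\ref{galpart}(b) --- is both unnecessary and not actually a proof: you concede that such an $e'$ ``need not exist for an arbitrary hypergraph'' and leave the construction, the compatibility with the indices $j_{i'}$, and the displacement bound unverified. Note also that the Claim is stated and proved in the paper as an unconditional fact about the sets $Z_{\vj}$; it does not use the hypothesis that the $Z_{\vj}$ cover $P_{d,n}$, so an argument that genuinely needed that hypothesis would already be a sign that something is off. Once you replace ``the degenerate cell is the point $\gamma$'' by ``the degenerate cell is empty,'' your proposal collapses to the correct (and complete) proof.
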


\begin{proof} By the definition of $T(\vx,\e)$, it suffices to show that
\[ \dist (\vx, Z_{\vj}) \le \dist (\vx, Z_{\vk}). \]
Let $\vy \in Z_{\vk}$ be closest to $\vx$, and let $e$ be an edge of $H$ that contains the $\vk$-th $d$-cell of the $d \times n$-partition $\vy$ (by the definition of $Z_{\vk}$ such an edge exists). Consider $\vz \in P_{d,n}$ which coincides with $\vx$ in the $i$-th copy of $\Delta_{n-1}$, and with $\vy$ in all other copies. Then $\dist (\vx, \vz) \le \dist (\vx, \vy)$. Moreover, the $\vj$-th $d$-cell of $\vz$ consists of an empty cell in $U^i$ (because $z^i_{j_i}=x^i_{j_i}=0$), and otherwise coincides with the $\vk$-th $d$-cell of $\vy$, implying that the $\vj$-th $d$-cell of $\vz$ is also contained in $e$. This shows that $\vz \in Z_{\vj}$, and hence
\[ \dist (\vx, Z_{\vj}) \le \dist (\vx, \vz) \le \dist (\vx, \vy) = \dist (\vx, Z_{\vk}). \]
\end{proof}

Returning to the main proof, we verify the implication $x^i_j=0 \Rightarrow \vx \in B^i_j(\e)$. Indeed, assuming $x^i_j=0$, when we compare $\sigma^i_j(T(\vx,\e))$ and $\sigma^i_{j'}(T(\vx,\e))$, we find by Claim \ref{claim} that each term in the former is at least as large as the corresponding term in the latter. This implies that $\sigma^i_j(T(\vx,\e)) \ge \sigma^i_{j'}(T(\vx,\e))$ for all $j' \in [n]$, thus $\vx \in B^i_j(\e)$.

By Theorem \ref{dualpeleg}, we find a point $\vx(\e) \in \bigcap_{i=1}^d \bigcap_{j=1}^n B^i_j(\e)$. As before, this means that $\sigma^i_j(T(\vx(\e),\e))=a(\e)$ for some fixed $a(\e)>0$, regardless of $i$ and $j$.

We construct an auxiliary $d$-partite hypergraph $\Gamma(\e)$ in a way similar to the previous proof. The vertex set $V(\Gamma(\e))=V(\Gamma)$ is partitioned into $V^i=\{v^i_1,\ldots,v^i_n\}$, $i=1,\ldots,d$. The edge set $E(\Gamma(\e))$ consists of those sets $e_{\vj}=\{v^1_{j_1},\ldots,v^d_{j_d}\}$ such that $t_{\vj}(\vx(\e),\e)>0$. That is, an edge indexed by $\vj$ is present in $\Gamma(\e)$ if $\dist (\vx(\e), Z_{\vj}) < \e$.

The function $g: E(\Gamma(\e)) \to \mathbb{R}_+$ defined by $g(e_{\vj})=\frac{t_{\vj}(\vx(\e),\e)}{a(\e)}$ is a fractional edge-cover in $\Gamma(\e)$ satisfying $\sum_{e \in E(\Gamma(\e))} g(e) = n$. It follows from Corollary~\ref{lovaszcor} that there is an edge-cover $C(\e)$ in $\Gamma(\e)$ of size at most $(1+\ln d)n$.

Up till now, $\e>0$ was fixed. Because $P_{d,n}$ is compact and the number of possibilities for the edge-cover $C(\e)$ is finite, we can find a sequence of positive numbers $\e_k \to 0$ such that $\vx(\e_k)$ converges to some $\vx \in P_{d,n}$, and $C(\e_k)$ is a constant set $C$ of at most $(1+\ln d)n$ $d$-tuples that together cover $V(\Gamma)$. For any $e_{\vj} \in C$ we have $\dist (\vx(\e_k), Z_{\vj}) < \e_k$ for all $k$, implying that $\vx \in Z_{\vj}$. Thus, for any $e_{\vj} \in C$, the corresponding $\vj$-th $d$-cell of $\vx$ is contained in an edge of $H$. As the $e_{\vj} \in C$ cover $V(\Gamma)$, we have at most $(1+\ln d)n$ edges of $H$ that together cover all cells of $\vx$, and hence cover $V$, as required.

In the special case $d=2$, the hypergraphs $\Gamma(\e)$ are actually bipartite graphs, and so we may use Theorem \ref{bipgallai} instead of Corollary \ref{lovaszcor}, and get $n$ as an upper bound instead of $(1+\ln d)n$. \hspace{\stretch{1}}$\square$

\section{Are the bounds tight?}\label{examples}

The $\frac{n}{d-1}$ bound in Theorem \ref{disjointcells} is obviously tight for $d=2$. For large $d$, it has been shown by Matou\v{s}ek \cite{matousek} that the bound is tight up to a factor of order $\log ^2 d$.

Regarding Theorem \ref{maintheorem}, the following simple example shows that the bound $\rho(H) \le n$ for $d=2$ is best possible.

\begin{example}
Let $V=U^1 \cup U^2$, and let $n$ be a positive integer. Consider the $2$-interval hypergraph $H=(V,E)$ having $n^2$ edges of the form $e=I^1_k \cup I^2_\ell$, $k=1,\ldots,n$, $\ell=1,\ldots,n$, where $I^i_j = [\frac{j-1}{n},\frac{j}{n}] \subseteq U^i$. By the pigeonhole principle, every $2 \times n$-partition of $V$ has a $2$-cell that is contained in an edge of $H$. Clearly, $\rho(H)=n$. \hspace{\stretch{1}}$\lozenge$
\end{example}

\medskip

Moving to $d=3$, the following example shows that $\rho(H) \le n$ cannot be guaranteed anymore.

\begin{example}
Let $V=U^1 \cup U^2 \cup U^3$, and let $n=2$. For $i=1,2,3$, $v=0,1$, and $0 \le \ell < 1$, we denote by $I^i_{v,\ell}$ the closed subinterval of $U^i$ that contains its endpoint $v$ and has length $\ell$. We construct a $3$-interval hypergraph $H=(V,E)$ in which every edge is indexed by some $\vv \in \{0,1\}^3$ and some $\vl \in [0,1)^3$, and is of the form:
\[ e_{\vv,\vl} = I^1_{v_1,\ell_1} \cup I^2_{v_2,\ell_2} \cup I^3_{v_3,\ell_3} \]
We put $e_{\vv,\vl}$ in $E$ only for certain choices of $\vv, \vl$, namely those satisfying:
\begin{itemize}
\item $\ell_i \in \{ \frac{k}{24} \mid k=0,1,\ldots,23\}$, $i=1,2,3$
\end{itemize}
and either
\begin{itemize}
\item $\vv \in \{(0,0,0),(1,0,0),(0,1,0),(0,0,1)\}$ and $\ell_1 + \ell_2 + \ell_3 = \frac{47}{24}$
\end{itemize}
or
\begin{itemize}
\item $\vv \in \{(1,1,1),(0,1,1),(1,0,1),(1,1,0)\}$ and $\ell_1 + \ell_2 + \ell_3 = 1$
\end{itemize}

We proceed to show that the premise of Theorem \ref{maintheorem} is satisfied for $n=2$. Namely, that every $3 \times 2$-partition of $V$ has a $3$-cell that is contained in an edge of $H$. Let $$[0,c^1),(c^1,1];\,[0,c^2),(c^2,1];\,[0,c^3),(c^3,1]$$ be such a $3 \times 2$-partition. First, assume that $\frac{1}{24} \le c^i \le \frac{23}{24}$ for $i=1,2,3$. If $c^1 + c^2 +c^3 \le \frac{15}{8}$, then rounding each $c^i$ up to the nearest number of the form $\frac{k}{24}$, the sum will not exceed $\frac{47}{24}$, and hence the $3$-cell $[0,c^1) \cup [0,c^2) \cup [0,c^3)$ is contained in some $e_{\vv,\vl} \in E$ with $\vv =(0,0,0)$. Similarly, if $(1-c^1) + (1-c^2) + (1-c^3) \le \frac{7}{8}$, the $3$-cell $(c^1,1] \cup (c^2,1] \cup (c^3,1]$ is contained in some $e_{\vv,\vl} \in E$ with $\vv =(1,1,1)$. In either case we are done, therefore we may assume that:
\[ \frac{15}{8} < c^1 + c^2 + c^3 < \frac{17}{8} \]
Arguing similarly with respect to each of the other three pairs of antipodal $\vv \in \{0,1\}^3$, we see that we are done unless the following three constraints hold as well:
\[ \frac{15}{8} < (1-c^1) + c^2 + c^3 < \frac{17}{8} \]
\[ \frac{15}{8} < c^1 + (1-c^2) + c^3 < \frac{17}{8} \]
\[ \frac{15}{8} < c^1 + c^2 + (1-c^3) < \frac{17}{8} \]
Combining each of these three constraints with the first one above yields:
\[ |c^i - (1-c^i)| < \frac{1}{4}, \,\,i=1,2,3 \]
This, in turn, yields $\frac{3}{8} < c^i < \frac{5}{8}$, $i=1,2,3$, contradicting $c^1 + c^2 + c^3 > \frac{15}{8}$.

It remains to handle cases where one or more cells in the given $3 \times 2$-partition are shorter than $\frac{1}{24}$ (``short" for brevity). Obviously, there is at most one short cell in each $U^i$. If there are short cells in all $U^i$, then the $3$-cell that they form is easily contained in an edge of $H$. If there are short cells in two of the $U^i$, we look at the $3$-cell that they form together with the shorter of the two cells in the third $U^i$; again, it is easily contained in an edge of $H$. Finally, suppose that there is a short cell in only one of the $U^i$, say $U^1$. If the short cell is $[0,c^1)$, then the $3$-cell $[0,c^1) \cup [0,c^2) \cup [0,c^3)$ is contained in $e_{(0,0,0),(\frac{1}{24},\frac{23}{24},\frac{23}{24})}$. If the short cell is $(c^1,1]$ then the $3$-cell $(c^1,1] \cup [0,c^2) \cup [0,c^3)$ is contained in $e_{(1,0,0),(\frac{1}{24},\frac{23}{24},\frac{23}{24})}$. This completes the verification of the premise of Theorem \ref{maintheorem} for $n=2$.

On the other hand, $\rho(H)=3$ in this example. To see that there is no edge-cover of size $2$, assume to the contrary that $e_{\vv,\vl}, e_{\vec{v'},\vec{\ell'}} \in E$ cover $V$. Then necessarily $\vv,\vec{v'}$ are antipodal in $\{0,1\}^3$, and therefore, of the two sums $\ell_1 + \ell_2 + \ell_3$ and $\ell'_1 + \ell'_2 + \ell'_3$, one equals $\frac{47}{24}$ and the other $1$. Hence the total length of these two $3$-intervals is $\frac{71}{24}$, insufficient to cover $V$. \hspace{\stretch{1}}$\lozenge$
\end{example}

\medskip

The above example suggests that for $d=3$ it might be possible to improve the bound $\rho(H) \le (1+\ln 3)n$ to $\rho(H) \le \frac{3}{2}n$. While we are currently unable to achieve this, we can reduce the constant somewhat, and prove $\rho(H) < \frac{7}{4}n$. This follows from the respective improvement of the bound of Corollary \ref{lovaszcor} under the conditions relevant to our application.

To state this improvement, we recall that a function $f: E \to \mathbb{R}_+$ is called a {\em perfect fractional matching} (or equivalently a {\em perfect fractional edge-cover}) in $H=(V,E)$ if $\sum_{e \ni v} f(e) = 1$ for all $v \in V$.

\begin{proposition}\label{prop}
Let $H=(V,E)$ be a $3$-partite hypergraph, with $V=V^1 \cup V^2 \cup V^3$ where $|V^i|=n$ for $i=1,2,3$. If $H$ has a perfect fractional matching, then $\rho(H) < \frac{7}{4}n$.
\end{proposition}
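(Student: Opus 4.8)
The plan is to replace the greedy/LP rounding behind Corollary~\ref{lovaszcor} by a direct first-moment (probabilistic) argument that genuinely exploits the \emph{perfectness} of the fractional matching. Write the perfect fractional matching as $f\colon E\to\mathbb{R}_+$ with $\sum_{e\ni v}f(e)=1$ for every $v\in V$. The idea is to cover one part, say $V^1$, \emph{exactly}, and to pay only for the vertices of $V^2\cup V^3$ that happen to be missed. Concretely, I would choose, independently for each $a\in V^1$, a random edge $e(a)\ni a$, where an edge $e\ni a$ is selected with probability $f(e)$; this is a genuine probability distribution precisely because $\sum_{e\ni a}f(e)=1$. Since edges through distinct vertices of $V^1$ are themselves distinct, the set $C_0=\{e(a):a\in V^1\}$ consists of exactly $n$ edges and covers all of $V^1$.

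Next I would estimate the expected number of vertices of $V^2\cup V^3$ left uncovered by $C_0$. For $a\in V^1$ and $b\in V^2$ put $g(a,b)=\sum_{e\supseteq\{a,b\}}f(e)$, which is exactly the probability that $e(a)$ contains $b$. Because the choices $e(a)$ are independent over $a\in V^1$, a vertex $b\in V^2$ is left uncovered with probability $\prod_{a\in V^1}\bigl(1-g(a,b)\bigr)$. Here the perfectness of $f$ enters decisively: $\sum_{a\in V^1}g(a,b)=\sum_{e\ni b}f(e)=1$, so by the inequality $1-x\le e^{-x}$ we obtain $\prod_{a}\bigl(1-g(a,b)\bigr)\le e^{-\sum_a g(a,b)}=e^{-1}$. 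The identical computation for $c\in V^3$ (with $g(a,c)=\sum_{e\supseteq\{a,c\}}f(e)$, so that $\sum_a g(a,c)=1$) yields the same bound. Hence, by linearity of expectation, the expected number of uncovered vertices in $V^2\cup V^3$ is at most $2n/e$.

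Finally, every vertex of $V$ lies in at least one edge (each is fractionally covered to extent $1$), so adjoining one edge per uncovered vertex of $V^2\cup V^3$ to $C_0$ produces an edge-cover $C$ with $\mathbb{E}[|C|]\le n+2n/e$. Since $2/e<3/4$, we get $\mathbb{E}[|C|]<\tfrac{7}{4}n$, and therefore some realization satisfies $|C|\le\mathbb{E}[|C|]<\tfrac{7}{4}n$, giving $\rho(H)<\tfrac{7}{4}n$. The only real point to watch is the source of the factor $1/e$: it rests on the exact equality $\sum_a g(a,\cdot)=1$, i.e.\ on the assumption that the fractional matching is \emph{perfect} and that $H$ is $3$-partite, so that covering $V^1$ costs exactly $n$ while the two remaining parts each contribute at most $n/e$ in expectation. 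The elementary arithmetic fact $2/e<3/4$ is what ultimately delivers the constant and explains why $\tfrac{7}{4}$ (rather than the weaker $1+\ln 3$ of Corollary~\ref{lovaszcor}) becomes available under these hypotheses.
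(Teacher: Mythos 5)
Your proof is correct, and it takes a genuinely different route from the one in the paper. The paper's argument is a three-stage greedy construction inside the support of $f$: it starts from a maximum matching $M$, invokes F\"uredi's theorem (Theorem \ref{furedi}) to guarantee $|M| \ge \frac{n}{2}$, then repeatedly adds edges covering two new vertices, and finally covers the leftover set $V_0$ one vertex at a time; perfectness of $f$ enters only through the bound $|V_0| < n$ (edges meeting $V_0$ do so in a single vertex, hence put twice as much $f$-weight on $V \setminus V_0$ as on $V_0$), and the count $|C| = \frac{1}{2}(3n - |M| + |V_0|)$ delivers $\frac{7}{4}n$. You instead round the fractional matching randomly: perfectness makes $\{f(e)\}_{e\ni a}$ a genuine distribution at each $a \in V^1$, $3$-partiteness guarantees both that the $n$ chosen edges are distinct and that $\sum_{a\in V^1} g(a,b) = \sum_{e\ni b} f(e) = 1$ for each $b$ in the other parts, and the first-moment computation with $1-x\le e^{-x}$ bounds the expected number of missed vertices by $2n/e$. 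All steps check out, including the passage from $\mathbb{E}[|C|]<\frac{7}{4}n$ to the existence of a good realization. Your approach avoids F\"uredi's theorem entirely and in fact proves the slightly stronger bound $\rho(H) \le \bigl(1+\frac{2}{e}\bigr)n \approx 1.736\,n$, versus the paper's $\frac{7}{4}n = 1.75\,n$. The trade-off is in how the two ideas scale with $d$: the analogous randomized bound for $d$-partite hypergraphs is $\bigl(1+\frac{d-1}{e}\bigr)n$, linear in $d$ and hence eventually much worse than the $(1+\ln d)n$ of Corollary \ref{lovaszcor}, whereas the paper's greedy refinement is tailored as a small-$d$ improvement of that logarithmic bound; for $d=3$, however, your constant wins.
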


\begin{proof} Let $f: E \to \mathbb{R}_+$ be a perfect fractional matching in $H$. Using only edges in the support of $f$, we construct an edge-cover $C$ in three steps:

Step 1: We find a matching $M$ of maximal size, and place its edges in $C$.

Step 2: For as long as we can find an edge covering two of the yet uncovered vertices, we add such edges to $C$.

Step 3: We cover the remaining uncovered vertices one-by-one, adding suitable edges to $C$.

By Theorem \ref{furedi}, we have $|M| \ge \frac{n}{2}$ for the matching in Step 1. We also observe that the set $V_0$ of uncovered vertices after the completion of Step 2 satisfies $|V_0| < n$. Indeed, the edges that intersect $V_0$ do so in only one vertex, and therefore contribute twice as much $f$-weight to $V \setminus V_0$ than to $V_0$. Since the total $f$-weight on every vertex is $1$, this implies that $2|V_0| \le |V \setminus V_0|$, or $|V_0| \le n$. The strict inequality comes from the fact that there is positive contribution of $f$-weight to $V \setminus V_0$ from edges used in the first two steps (recall that they are in the support of $f$).

Now we can estimate the number of edges used as follows:
\[ |C| = |M| + \frac{1}{2}(3n - 3|M| - |V_0|) + |V_0| = \frac{1}{2}(3n - |M| + |V_0|) < \frac{1}{2}(3n - \frac{n}{2} + n) = \frac{7}{4}n \]
\end{proof}

\begin{corollary}\label{sevenfourths}
Under the condition of Theorem \ref{maintheorem} for $d=3$, we have $\rho(H) < \frac{7}{4}n$.
\end{corollary}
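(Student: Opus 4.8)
The plan is to re-run the proof of Theorem~\ref{maintheorem} in the case $d=3$ essentially verbatim, up to and including the construction of the auxiliary $3$-partite hypergraph $\Gamma(\e)$ and the function $g(e_{\vj})=\frac{t_{\vj}(\vx(\e),\e)}{a(\e)}$, and only then to replace the final appeal to Corollary~\ref{lovaszcor} by an appeal to Proposition~\ref{prop}. The one new ingredient that makes this possible is the observation that $g$ is not merely a fractional edge-cover but in fact a \emph{perfect} fractional matching in $\Gamma(\e)$. Indeed, the point $\vx(\e)$ was produced precisely so that every layer sum of $T(\vx(\e),\e)$ equals the common value $a(\e)$; hence for each vertex $v^i_j$ of $\Gamma(\e)$ we have $\sum_{e_{\vj}\ni v^i_j} g(e_{\vj})=\frac{1}{a(\e)}\sum_{\vj:\, j_i=j} t_{\vj}(\vx(\e),\e)=\frac{\sigma^i_j(T(\vx(\e),\e))}{a(\e)}=1$. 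Moreover every edge of $\Gamma(\e)$ lies in the support of $g$ by construction, so Proposition~\ref{prop} applies to $\Gamma(\e)$ with its parts $V^1,V^2,V^3$ of size $n$.

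Concretely, I would carry out the following steps. First, note that $\Gamma(\e)$ is $3$-partite with parts of size $n$ and, by the computation above, carries the perfect fractional matching $g$. Second, apply Proposition~\ref{prop} to obtain an edge-cover $C(\e)$ of $\Gamma(\e)$ with $|C(\e)|<\frac{7}{4}n$, in place of the bound $(1+\ln 3)n$ coming from Corollary~\ref{lovaszcor}. Third, reuse the compactness argument of Theorem~\ref{maintheorem} without change: since $P_{d,n}$ is compact and there are only finitely many possible covers, pass to a sequence $\e_k\to 0$ along which $\vx(\e_k)\to\vx$ and $C(\e_k)$ stabilizes to a fixed set $C$ with $|C|<\frac{7}{4}n$; for each $e_{\vj}\in C$ the inequality $\dist(\vx(\e_k),Z_{\vj})<\e_k$ forces $\vx\in Z_{\vj}$ in the limit, so the $\vj$-th $3$-cell of $\vx$ sits inside an edge of $H$. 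As the $e_{\vj}\in C$ cover $V(\Gamma)$, the corresponding edges of $H$ cover $V$, yielding $\rho(H)\le|C|<\frac{7}{4}n$.

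The substance of the argument is entirely front-loaded into Proposition~\ref{prop}, so there is no real obstacle left in the topological part; the only points needing care are bookkeeping ones. I would verify that the perfect-matching identity above is exactly the conclusion extracted from Theorem~\ref{dualpeleg} (equal layer sums in every direction), so that no new topology is required. I would also confirm that the strict inequality survives the limit: each $|C(\e_k)|$ is an integer strictly below $\frac{7}{4}n$, and the stabilized value $|C|$ is one of these integers, so $\rho(H)\le|C|<\frac{7}{4}n$ is genuinely strict. Everything else is a literal transcription of the $d=3$ specialization of the proof of Theorem~\ref{maintheorem}.
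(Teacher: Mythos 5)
Your proposal is correct and is exactly the argument the paper intends: rerun the proof of Theorem \ref{maintheorem} for $d=3$, observe that the equal layer sums make $g$ a perfect fractional matching on the $3$-partite hypergraph $\Gamma(\e)$ with parts of size $n$, and substitute Proposition \ref{prop} for Corollary \ref{lovaszcor} before taking the limit $\e_k\to 0$. The verification that strictness of the bound survives the compactness argument is a worthwhile detail that the paper leaves implicit.
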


An analogous improvement of the bound in Theorem \ref{maintheorem}, taking advantage of $d$-partiteness, can be obtained for arbitrary $d$. But it only leads to an insignificant improvement of the constant $1+\ln d$ for large $d$, hence we do not state it explicitly. Unfortunately, we do not know if the bound $\rho(H) \le (1+\ln d)n$ is close to best possible as $d$ grows to infinity.

%%%%%%%%%%%%%%%%%%%%%%%%%%%%%%%%%%%%%

\end{document}